\pgfplotsset{compat=newest,tick label style={font=\scriptsize}}
\setlist[enumerate,1]{label = \upshape(\roman*), ref = (\arabic*)}
\theoremstyle{plain}
\newtheorem{theorem}{Theorem}[section]
\newtheorem{prop}[theorem]{Proposition}
\newtheorem{lemma}[theorem]{Lemma}
\newtheorem{cor}[theorem]{Corollary}
\theoremstyle{definition}
\newcommand{\myeqref}[1]{Eqn.~\eqref{#1}}
\newcommand{\D}{\mathbb{D}}
\newcommand{\N}{\mathbb{N}}
\newcommand{\C}{\mathbb{C}}
\newcommand{\CO}{C_\varphi}
\newcommand{\CDO}{D_\varphi}
\newcommand{\DCO}{DC_\varphi}
\newcommand{\Ph}{\varphi}
\newcommand{\SI}{\sigma}
\title{Composition-Differentiation Operators on $S^2(\D)$}
\author{Robert F.~Allen\textsuperscript{1}, Katherine C.~Heller\textsuperscript{2}, and Matthew A.~Pons\textsuperscript{2}}
\address{\textsuperscript{1}Department of Mathematics and Statistics, University of Wisconsin-La Crosse}
\address{\textsuperscript{2}Department of Mathematics \& Actuarial Science, North Central College}
\keywords{Composition operators, Differentiation.}
\subjclass{primary: 47B33}
\begin{document}

\begin{abstract} 
	We investigate composition-differentiation operators acting on the space $S^2$, the space of analytic functions on the open unit disk whose first derivative is in $H^2$.  Specifically, we determine characterizations for bounded and compact composition-differentiation operators acting on $S^p$.  In addition, for particular classes of inducing maps, we compute the norm, and identify the spectrum.  Finally, for particular linear fractional inducing maps, we determine the adjoint of the composition-differentiation operator acting on weighted Bergman spaces which include $S^2, H^2$, and the Dirichlet space.
\end{abstract}	
	
\maketitle

\section{Introduction}

Let $\D=\{z\in\C:|z|<1\}$ denote the open unit disk in the complex plane and let $\varphi$ be an analytic self-map of the disk; that is, $\varphi(\mathbb{D})\subseteq\mathbb{D}$. For $\mathcal{X}$ a Hilbert or Banach space of analytic functions, define the composition operator $C_{\varphi}$ on $\mathcal{X}$ by $C_{\varphi}(f)=f\circ\varphi$ for $f\in\mathcal{X}$. Composition operators have been studied in great detail on spaces of analytic functions such as the Hardy space $H^2$, the weighted Bergman spaces $A^2_{\alpha}$, the Dirichlet space $\mathcal{D}$, the Bloch space $\mathcal{B}$ and many others. The interested reader can learn much about composition operators in \cite{CowenMacCluer:1995}.

Recently authors have begun to investigate the composition-differentiation operator $D_{\varphi}$ on $\mathcal{X}$, defined by $D_{\varphi}(f)=f'\circ\varphi$ for $f\in\mathcal{X}$ (see \cite{FatehiHammond:2020}, \cite{HibschweilerPortnoy:2005}, \cite{Ohno:2006}, \cite{Ohno:2009}). The primary goal of this paper is to investigate the adjoint of a bounded composition-differentiation operator acting on the space $S^2$ and compare this with known results (\cite{FatehiHammond:2020}, \cite{AllenHellerPons:2015}). To do so, we first  give conditions under which a composition-differentiation operator is bounded on $S^2$. In fact, we give a more general result for the $S^p$ spaces and also determine compactness criteria. This work also parallels the results of \cite{Ohno:2014}. Additionally, we explore the spectrum and norm of such an operator for a specific class of symbols $\Ph$. Finally, we explore the adjoint of a composition-differentiation operator acting on the standard weighted Bergman spaces $A_{\alpha}^2$ and $S^2$. We then compare these formulas and discuss how to tie these results together.

\section{Preliminaries}

The space $S^2$ consists of analytic functions in the unit disk whose derivative is in the Hardy space $H^2$.  In terms of a power series, if $f(z)=\sum_{n=0}^{\infty}a_n z^n$, then
\[
\|f\|_{S^2}^2=|a_0|^2+\sum_{n=1}^{\infty}|a_n|^2n^2<\infty.
\]
Thus $S^2$ is a weighted Hardy space with weight sequence $\beta(n)$ given by $\beta(0)=1$ and $\beta(n)=||z^n||_{S^2}=n$ for $n\geq 1$. We can also define the norm on $S^2$ in terms of an integral:
\[
||f||^2_{S^2}=|f(0)|^2+\int_0^{2\pi}|f'(e^{i\theta})|^2\frac{d\theta}{2\pi},
\] where the function in the integral is the radial limit function, which exists almost everywhere for $H^2$ functions.
It is straight forward to show that the two norms given are in fact equal. The inner product on $S^2$ can be recovered from the norm and is given by
\[
\langle f,g\rangle_{S^2}=f(0)\overline{g(0)}+\int_0^{2\pi}f'(e^{i\theta})\overline{g'(e^{i\theta})}\frac{d\theta}{2\pi}.
\]
We also note that the norm on $S^2$ can be written as 
\[
||f||^2_{S^2}=|f(0)|^2+||f'||^2_{H^2}.
\]

For each point $w\in\mathbb{D}$, evaluation of functions in $S^2$ at $w$ is a bounded linear functional and for each $f\in S^2$, $f(w)=\langle f,K_w\rangle$ where
\[
K_w(z)=1+\sum_{n=1}^{\infty}\frac{(\overline{w}z)^n}{n^2}.
\]
Additionally, for $w\in\mathbb{D}$, evaluation of the first derivative of functions in $S^2$ at $w$ is a bounded linear functional \cite[Theorem 1.9]{CowenMacCluer:1995} and $f'(w)=\langle f, K^{(1)}_w\rangle$, where
\begin{align}
K_w^{(1)}(z)&=\frac{1}{\overline{w}}\log\left(\frac{1}{1-\overline{w}z}\right)
\end{align}
for $w\neq 0$ and $K_0^{(1)}(z)=z$ with $||K_w^{(1)}||^2_{S^2}=\frac{1}{1-|w|^2}$. 

In a similar manner, for $p>1$ we can define the more general space $S^p$ as the set of analytic
functions $f$ on the unit disk whose derivative $f'$ lies in the Hardy space $H^p$ and define the norm on $S^p$ by

\[
||f||_{S^p}=|f(0)|+||f'||_{H^p}
\] where the $H^p$ norm is defined by 
\[
\|g\|_{H^p}^p=\int_0^{2\pi}|g(e^{i\theta})|^p\frac{d\theta}{2\pi}
\] and again we are thinking of the function in the integral as the radial limit function of $g\in H^p$.

When $p=2$, this definition gives the same set of functions as we previously defined for $S^2$, but with an equivalent norm and only Banach space structure. For the sake of notational ease, we will use this second form  of the norm to discuss boundedness and compactness so there is no need to consider a separate case for $S^2$.

\section{Boundedness and Compactness}

In order to discuss adjoints of composition-differentiation operators on $S^2$, we first need to understand when these operators are bounded. That is the focus of this section and we also discuss compactness.  As we will see, the class of bounded composition-differentiation operators on the $S^p$ spaces if somewhat limited. Our first result will be familiar to experts but we include a proof for the sake of completeness. In the following proof, we focus attention on the operator $\DCO$. The notation $\CDO=\CO D$ was adopted in \cite{FatehiHammond:2020} since the the differentiation operator is not bounded on $S^p$, but to avoid introducing any new notation we will use $\DCO$ instead of writing the operator as a single symbol.

\begin{prop}\label{prop:boundedinto}
Suppose $\mathcal{X}$ and $\mathcal{Y}$ are functional Banach spaces of analytic functions defined on $\D$.  Let $\Ph$ be a self-map of $\D$ and let $T:\mathcal{X}\to\mathcal{Y}$ be either $\CO$, $\CDO$, or $D\CO$. Then $T$ is bounded from $\mathcal{X}$ to $\mathcal{Y}$ if and only if $T$ maps $\mathcal{X}$ into $\mathcal{Y}$.
\end{prop}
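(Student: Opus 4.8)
The plan is to prove this via the closed graph theorem, which is the standard route for showing that an abstractly-defined operator between Banach spaces of analytic functions is automatically bounded once it is well-defined. The crucial input is that in a functional Banach space of analytic functions, point evaluations are bounded linear functionals, and convergence in norm therefore implies pointwise convergence on $\D$.

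First I would assume $T$ maps $\mathcal{X}$ into $\mathcal{Y}$; the converse direction is trivial since a bounded operator into $\mathcal{Y}$ in particular has range in $\mathcal{Y}$. To apply the closed graph theorem, I would take a sequence $f_n\to f$ in $\mathcal{X}$ with $Tf_n\to g$ in $\mathcal{Y}$, and show $g=Tf$. Since $f_n\to f$ in $\mathcal{X}$ and point evaluations $\mathcal{X}\to\C$ are bounded, $f_n(z)\to f(z)$ for every $z\in\D$. The next step is to upgrade this to convergence of the relevant derivatives/compositions: for $T=\CO$ we directly get $f_n(\Ph(z))\to f(\Ph(z))$ for each $z\in\D$; for $T=\CDO$ or $T=D\CO$ we need $f_n'\to f'$ pointwise, which follows because $f_n\to f$ in $\mathcal{X}$ forces $f_n\to f$ uniformly on compact subsets of $\D$ (a consequence of the bounded point evaluations together with a normal families argument, or directly from Cauchy's integral formula applied on a slightly larger disk), hence $f_n'\to f'$ pointwise on $\D$ as well, and then $f_n'(\Ph(z))\to f'(\Ph(z))$ for each $z$. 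In all three cases we conclude $(Tf_n)(z)\to (Tf)(z)$ for every $z\in\D$.

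On the other hand, $Tf_n\to g$ in $\mathcal{Y}$, and bounded point evaluations on $\mathcal{Y}$ give $(Tf_n)(z)\to g(z)$ for each $z\in\D$. Uniqueness of limits in $\C$ then yields $g(z)=(Tf)(z)$ for all $z\in\D$, i.e. $g=Tf$ as analytic functions, so $g=Tf$ in $\mathcal{Y}$. Hence the graph of $T$ is closed, and since $\mathcal{X}$ and $\mathcal{Y}$ are Banach spaces, the closed graph theorem gives that $T$ is bounded.

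The main obstacle, such as it is, is the step of passing from norm convergence in $\mathcal{X}$ to pointwise convergence of the \emph{derivatives} $f_n'$, needed for the $\CDO$ and $D\CO$ cases; this requires knowing that norm convergence implies locally uniform convergence of the functions themselves, which I would justify from the boundedness of point evaluations via a standard Cauchy-estimate argument rather than assuming it outright. Everything else is routine bookkeeping with the closed graph theorem.
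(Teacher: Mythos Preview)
Your proposal is correct and follows essentially the same closed graph theorem argument as the paper. The only minor difference is in justifying $f_n'(\Ph(z))\to f'(\Ph(z))$: the paper invokes directly that the derivative evaluation functional $f\mapsto f'(w)$ is continuous on $\mathcal{X}$, whereas you derive this from norm convergence $\Rightarrow$ locally uniform convergence $\Rightarrow$ pointwise convergence of derivatives via Cauchy estimates; both routes are valid and yield the same conclusion.
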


\begin{proof}
Focusing on $\DCO$, necessity of the condition is clear and we will verify sufficiency. Suppose $\DCO$ maps $\mathcal{X}$ into $\mathcal{Y}$. To conclude that $\DCO$ is bounded, we will appeal to the Closed Graph Theorem.  Let $(f_n)$ be a sequence in $\mathcal{X}$ converging in norm to $f\in\mathcal{X}$.	Also suppose $(\DCO f_n)$ converges in norm to $g\in\mathcal{Y}$. For $x\in \D$, it is clear that \[(\DCO f_n)(x)=f_n'(\Ph(x))\Ph'(x)=K_{\Ph(x)}^{(1)}(f_n)\Ph'(x).\] Also, our assumptions on $(f_n)$ combined with the continuity of the evaluation functional guarantee that $\left(K_{\Ph(x)}^{(1)}(f_n)\right)$ converges to $K_{\Ph(x)}^{(1)}(f)$.  Thus, for each $x\in \D$, we have that $((\DCO f_n)(x))$ converges to \[K_{\Ph(x)}^{(1)}(f)\Ph'(x)=f'(\Ph(x))\Ph'(x)=(\DCO f)(x).\]

On the other hand, in a functional Banach space, norm convergence implies point-wise convergence and so, for $x\in\D$, we see that $((\DCO f_n)(x))$ converges to $g(x)$ and thus $(\DCO f)(x)=g(x)$ for every $x\in \D$ or $\DCO f=g$.  Therefore we conclude that $\DCO$ is bounded by the Closed Graph Theorem. We note that an analogous proof holds for the other two operators in the statement.
\end{proof}

Next we discuss boundedness of $\CDO$ acting on the $S^p$ spaces. Since the function $f(z)=z^2/2\in S^p$ for $p>1$ and $g(z)=z\in H^p$ for $p>1$, a necessary condition for each of the three operators in the next theorem to be bounded is that $\Ph\in S^p$. This fact follows from the previous theorem and we take it as a standing assumption on $\Ph$ throughout the remainder of this section.

\begin{theorem}\label{thm:boundedoperators}
Let $\Ph$ be an analytic self-map of $\D$ with $\Ph\in S^p$. Then the following are equivalent.

\begin{enumerate}
	
\item $\CDO:S^p\to S^p$ is bounded.

\item $\CO:H^p\to S^p$ is bounded.

\item $\DCO:H^p\to H^p$ is bounded.

\end{enumerate}
\end{theorem}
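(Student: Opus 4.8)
The plan is to reduce all three statements to a single condition by exploiting a structural feature of $S^p$: with the norm $\|f\|_{S^p}=|f(0)|+\|f'\|_{H^p}$ adopted throughout this section, the map $J\colon S^p\to\C\oplus H^p$ defined by $Jf=(f(0),f')$ is an isometric isomorphism, where the direct sum is given the norm $\|(c,g)\|=|c|+\|g\|_{H^p}$. Isometry is immediate from the definition of $\|\cdot\|_{S^p}$ and injectivity is clear; surjectivity follows by sending $(c,g)$ to the antiderivative $z\mapsto c+\int_0^z g(w)\,dw$, which lies in $S^p$ precisely because its derivative $g$ lies in $H^p$. So $J$ identifies $S^p$ isometrically with $\C\oplus H^p$, and an operator on (or into) $S^p$ is bounded if and only if its conjugate by $J$ is.

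Next I would push the operators of the theorem through $J$. For $g\in H^p$ the chain rule gives $(\CO g)(0)=g(\Ph(0))$ and $(\CO g)'=(g'\circ\Ph)\,\Ph'=\DCO g$, so $J\CO g=\bigl(g(\Ph(0)),\,\DCO g\bigr)$. Likewise, since $\CDO=\CO D$ one has $\CDO f=\CO(f')=f'\circ\Ph$, and applying the same identity with $f'$ in place of $g$ gives $J\CDO f=\bigl(f'(\Ph(0)),\,\DCO(f')\bigr)$; writing $f=J^{-1}(c,g)$, so that $f'=g$, this becomes $J\CDO J^{-1}(c,g)=\bigl(g(\Ph(0)),\,\DCO g\bigr)$. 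Thus both $J\CO\colon H^p\to\C\oplus H^p$ and $J\CDO J^{-1}\colon\C\oplus H^p\to\C\oplus H^p$ are governed by the same recipe $(c,g)\mapsto\bigl(g(\Ph(0)),\,\DCO g\bigr)$, with the $\CDO$ version simply discarding the first coordinate of its input.

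The last step is the elementary remark that a map into $\C\oplus H^p$ of the form $(c,g)\mapsto(\lambda(g),Sg)$ is bounded if and only if the linear functional $\lambda$ is bounded on $H^p$ and $S\colon H^p\to H^p$ is bounded: one direction is the triangle inequality, and the other follows by projecting onto each coordinate. Here $\lambda(g)=g(\Ph(0))$ is evaluation at the point $\Ph(0)\in\D$, which is always a bounded functional on $H^p$, and $S=\DCO$. Hence (i) and (ii) are each equivalent to boundedness of $\DCO\colon H^p\to H^p$, which is (iii), so the three conditions are equivalent.

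I do not anticipate a genuine obstacle; the care required is in two routine spots: verifying that $J$ is onto (i.e., that an antiderivative of an $H^p$ function really belongs to $S^p$, which is exactly the definition of $S^p$), and keeping straight the bookkeeping of ``differentiate then compose'' versus ``compose then differentiate'' so that the chain-rule computations align with the identity $(\DCO f)(x)=f'(\Ph(x))\Ph'(x)$ recorded earlier. As an alternative one could invoke Proposition~\ref{prop:boundedinto} to replace each occurrence of ``bounded'' by ``maps into,'' but working through the isometric identification $J$ delivers the equivalences directly and transparently.
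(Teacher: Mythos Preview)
Your argument is correct, and it takes a genuinely different route from the paper's. The paper argues by a cycle of implications $(i)\Rightarrow(ii)\Rightarrow(iii)\Rightarrow(i)$: in each step one checks only that the relevant operator carries the domain into the codomain (using the antiderivative to pass from $H^p$ to $S^p$, and the chain rule to identify $(g\circ\Ph)'$ with $\DCO g$), and then invokes Proposition~\ref{prop:boundedinto} (the Closed Graph Theorem) to upgrade ``maps into'' to ``bounded.'' You instead set up the isometric identification $J\colon S^p\to\C\oplus H^p$, $Jf=(f(0),f')$, and compute that both $J\CO$ and $J\CDO J^{-1}$ are given by $(c,g)\mapsto(g(\Ph(0)),\DCO g)$, so that boundedness of either reduces to boundedness of point evaluation at $\Ph(0)$ (automatic on $H^p$) together with boundedness of $\DCO$ on $H^p$. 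The underlying computations (antiderivative, chain rule) are the same in both proofs; what differs is the packaging. The paper's cycle is slightly quicker to write down but leans on Proposition~\ref{prop:boundedinto} three times, while your isometric identification bypasses the Closed Graph Theorem entirely, yields direct norm control rather than mere boundedness, and makes the common structure behind all three operators visible in a single formula.
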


\begin{proof}
$(i)\Rightarrow(ii)$ Suppose $D_\Ph:S^p\to S^p$ is bounded. To show $\CO:H^p\to S^p$ is bounded, let $g\in H^p$. Then choose $f\in S^p$ with $f'=g$, i.e. let \[f(z)=\int_0^z g(w)\,dw.\] Then \[\CO g=g\circ \Ph=f'\circ \Ph=\CDO f\in S^p.\] Hence $\CO$ maps $H^p$ into $S^p$ and is bounded by Proposition \ref{prop:boundedinto}.

$(ii)\Rightarrow (iii)$ Suppose $\CO:H^p\to S^p$ is bounded. To show $D\CO:H^p\to H^p$ is bounded, let $f\in H^p$. Then $\CO f=f\circ\Ph\in S^p$, which implies $(f\circ\Ph)'\in H^p$, or $\DCO f\in H^p$. Therefore $\DCO$ maps $H^p$ into $H^p$ and is bounded by Proposition \ref{prop:boundedinto}.

$(iii)\Rightarrow (i)$ Suppose $\DCO:H^p\to H^p$ is bounded. To show $D_\Ph:S^p\to S^p$ is bounded, let $f\in S^p$.  Then $f'\in H^p$ and $\DCO f'=(f'\circ\Ph)'\in H^p$, which implies $f'\circ\Ph\in S^p$ or $\CDO f\in S^p$. Thus $\CDO$ maps $S^p$ into $S^p$ and is bounded by Proposition \ref{prop:boundedinto}.
\end{proof}

\begin{theorem}\label{thm:bounded}
Let $\Ph$ be an analytic self-map of $\D$ with $\Ph\in S^p$. Then $D_\Ph:S^p\to S^p$ is bounded if and only if $\|\Ph\|_{\infty}<1$.
\end{theorem}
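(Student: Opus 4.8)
The plan is to deduce both directions from the equivalences in Theorem~\ref{thm:boundedoperators}, after first recording the structural fact that, for $p>1$, every $g\in S^p$ has absolutely summable Taylor coefficients and hence extends continuously to $\overline{\D}$. Indeed, $g'\in H^p$ has Fourier coefficients in $\ell^q$ for some finite $q$ --- by Hausdorff--Young when $1<p\le 2$, and by Parseval together with $H^p\subset H^2$ when $p\ge 2$ --- and since $\widehat{g}(n)=\widehat{g'}(n-1)/n$, Hölder's inequality (using $q<\infty$, so that $\sum_n n^{-q'}<\infty$) gives $(\widehat{g}(n))\in\ell^1$. Thus $S^p\subseteq A(\D)$, and in particular every function in $S^p$ is bounded on $\D$; this is the fact that drives the necessity argument.

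For sufficiency I would assume $\|\Ph\|_{\infty}=r<1$, so that $\Ph(\D)$ is contained in the compact disk $\overline{r\D}\subset\D$, and verify condition (iii), i.e. that $\DCO$ is bounded on $H^p$. For $f\in H^p$ one has $\DCO f=(f\circ\Ph)'=(f'\circ\Ph)\,\Ph'$. Cauchy's integral formula on a circle of radius $\rho\in(r,1)$, combined with the standard growth estimate for $H^p$ functions, yields $\sup_{|w|\le r}|f'(w)|\le C(r)\|f\|_{H^p}$, so $f'\circ\Ph\in H^{\infty}$ with norm at most $C(r)\|f\|_{H^p}$. Since $\Ph\in S^p$ by the standing hypothesis we have $\Ph'\in H^p$, whence $\DCO f=(f'\circ\Ph)\,\Ph'\in H^p$ with $\|\DCO f\|_{H^p}\le C(r)\|\Ph'\|_{H^p}\|f\|_{H^p}$. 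Boundedness of $\DCO$ on $H^p$ then gives, through Theorem~\ref{thm:boundedoperators}, boundedness of $\CDO$ on $S^p$.

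For necessity I would argue by contradiction: suppose $\CDO:S^p\to S^p$ is bounded while $\|\Ph\|_{\infty}=1$. By Theorem~\ref{thm:boundedoperators}, $\CO:H^p\to S^p$ is bounded, so $f\circ\Ph\in S^p$ for every $f\in H^p$. Now $\Ph$ cannot be constant (a constant self-map has sup-norm strictly less than $1$), so by the maximum modulus principle together with continuity of $\Ph$ on $\overline{\D}$ there is $\zeta_0\in\partial\D$ with $\eta_0:=\Ph(\zeta_0)\in\partial\D$. Choosing $0<s<1/p$ and $f(z)=(1-\overline{\eta_0}z)^{-s}$ --- which lies in $H^p$ since $sp<1$ --- the composition $\CO f=(1-\overline{\eta_0}\Ph)^{-s}$ is analytic on $\D$ and, belonging to $S^p\subseteq A(\D)$, must be bounded there; yet $|(\CO f)(z)|=|1-\overline{\eta_0}\Ph(z)|^{-s}\to\infty$ as $z\to\zeta_0$ in $\D$, because $\Ph(z)\to\eta_0$. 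This contradiction forces $\|\Ph\|_{\infty}<1$.

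The sufficiency half is a routine Cauchy-estimate computation; the real content is in the necessity half, where the key realization is the embedding $S^p\subseteq A(\D)$. It converts any point at which $\Ph$ touches $\partial\D$ into an obstruction: a suitable test function $f\in H^p$ makes $f\circ\Ph$ blow up near that point, which is incompatible with $f\circ\Ph\in S^p$. The only additional point to keep in mind is to dispose of the trivial case in which $\Ph$ is constant.
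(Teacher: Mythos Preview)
Your argument is correct. Both directions go through, and the preliminary observation that $S^p\subseteq A(\D)$ (via Hausdorff--Young/Parseval and H\"older) is exactly the structural ingredient the necessity half needs.

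The paper's own proof, however, is a one-line citation: it invokes Theorem~\ref{thm:boundedoperators} to pass to condition~(iii) and then quotes \cite[Corollary~1]{HibschweilerPortnoy:2005}, which already records that $\DCO:H^p\to H^p$ is bounded if and only if $\|\Ph\|_\infty<1$. So you and the paper share the same high-level reduction through Theorem~\ref{thm:boundedoperators}, but where the paper outsources the $H^p$ characterization, you supply it directly. Your sufficiency argument (Cauchy estimates on a compact subdisk, then multiply by $\Ph'\in H^p$) is presumably close in spirit to what underlies the cited corollary. Your necessity argument is a genuinely different tactic: rather than working inside $H^p$, you route through condition~(ii) and exploit the disk-algebra embedding $S^p\subseteq A(\D)$ together with a test function $f(z)=(1-\overline{\eta_0}z)^{-s}\in H^p$ that blows up at a boundary contact point of $\Ph$. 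This buys you a self-contained proof that makes transparent \emph{why} boundary contact is the obstruction, at the cost of being longer than the paper's citation.
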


\begin{proof}
The result follows from Theorem \ref{thm:boundedoperators} and \cite[Corollary 1]{HibschweilerPortnoy:2005}.
\end{proof}

\begin{cor}\label{cor:bounded}
Let $\Ph$ be an analytic self-map of $\D$ with $\Ph\in S^p$.  If $\CDO:S^p\to S^p$ is bounded, then $\CO:S^p\to S^p$ is bounded.
\end{cor}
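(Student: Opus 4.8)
The plan is to reduce everything to Proposition~\ref{prop:boundedinto}: since that result guarantees $\CO:S^p\to S^p$ is bounded the moment it maps $S^p$ into $S^p$, it suffices to verify that $f\circ\Ph\in S^p$ for every $f\in S^p$.

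First I would invoke Theorem~\ref{thm:bounded}. Because $\CDO=D_\Ph$ is assumed bounded on $S^p$, that theorem yields $\|\Ph\|_\infty=r<1$, so $\Ph(\D)\subseteq\{z:|z|\le r\}\subset\D$. Now fix $f\in S^p$. Then $f\circ\Ph$ is analytic on $\D$ and $(f\circ\Ph)(0)=f(\Ph(0))$ is a well-defined complex number, so only the derivative needs attention. By the chain rule $(f\circ\Ph)'=(f'\circ\Ph)\,\Ph'$; here $\Ph'\in H^p$ since $\Ph\in S^p$ by hypothesis, while $f'$ is analytic on $\D$ and $\Ph$ takes values in the compact set $\{|z|\le r\}\subset\D$, so $f'\circ\Ph$ is bounded, i.e.\ lies in $H^\infty$. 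Since the product of an $H^\infty$ function with an $H^p$ function again belongs to $H^p$, we conclude $(f\circ\Ph)'\in H^p$ and hence $f\circ\Ph\in S^p$. Proposition~\ref{prop:boundedinto} then finishes the argument.

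No serious obstacle arises here; the only point requiring any care is showing $(f'\circ\Ph)\,\Ph'\in H^p$, and the crucial observation making this automatic is that the bound $\|\Ph\|_\infty<1$ forces $f'\circ\Ph$ to be bounded. One could equally well bypass Theorem~\ref{thm:bounded} and argue directly: boundedness of $D_\Ph$ gives $D_\Ph f=f'\circ\Ph\in S^p$, and for $p>1$ every element of $S^p$ is continuous on $\overline{\D}$ (its derivative lies in $H^p\subseteq H^1$), hence bounded, so once more $(\CO f)'=(D_\Ph f)\,\Ph'$ is a product of a bounded analytic function with an $H^p$ function and therefore lies in $H^p$.
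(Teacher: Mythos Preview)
Your argument is correct. The paper states this corollary without proof, treating it as an immediate consequence of Theorem~\ref{thm:bounded}; your proof spells out exactly the implicit reasoning---namely that $\|\Ph\|_\infty<1$ forces $f'\circ\Ph\in H^\infty$, whence $(f\circ\Ph)'=(f'\circ\Ph)\Ph'\in H^p$ and Proposition~\ref{prop:boundedinto} applies. An equally short alternative the paper's structure suggests is to use Theorem~\ref{thm:boundedoperators}(ii) directly: boundedness of $\CDO$ gives $\CO:H^p\to S^p$ bounded, and since $S^p$ embeds continuously into $H^p$ (indeed into $H^\infty$, as you note in your second paragraph), $\CO:S^p\to S^p$ is bounded as a composition $S^p\hookrightarrow H^p\xrightarrow{\CO}S^p$.
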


Next we characterize compact $\CDO:S^p\to S^p$.  For composition operators, Shapiro showed that $\CO:S^p\to S^p$ is compact if and only if $\|\Ph\|_{\infty}<1$ in \cite[Theorem 2.1]{Shapiro}.  The boundary regularity of these spaces plays a critical role in this result. As will see below, this same result holds for composition-differentiation operators on $S^p$.

\begin{prop}[{\cite[Lemma 3.7]{Tjani:2003}}]\label{prop:compactreformulation}
Let $\Ph$ be a self-map of $\D$ and let $T:\mathcal{X}\to\mathcal{Y}$ be either $\CDO:S^p\to S^p$, $\CO:H^p\to S^p$, $D\CO:H^p\to H^p$. Then $T$ is compact if and only if whenever $(f_n)$ is a bounded sequence in $\mathcal{X}$ with $(f_n)\rightarrow 0$ uniformly on compact subsets of $\D$, it follows that $(\|Tf_n\|_{\mathcal{Y}})\rightarrow 0$.
\end{prop}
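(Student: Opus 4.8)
The plan is to prove both implications by the standard normal-families argument, exploiting that $S^p$ and $H^p$ are functional Banach spaces in which norm-bounded sets are locally uniformly bounded (hence normal families) and in which the norm is lower semicontinuous with respect to uniform convergence on compact subsets of $\D$. Note that in each of the three cases the statement implicitly requires $T$ to map $\mathcal{X}$ into $\mathcal{Y}$, so $T$ is bounded by Proposition~\ref{prop:boundedinto}; this will be used in the sufficiency direction.

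For necessity, suppose $T$ is compact and let $(f_n)$ be bounded in $\mathcal{X}$ with $f_n\to 0$ uniformly on compact subsets of $\D$. If $\|Tf_n\|_{\mathcal{Y}}\not\to 0$, pass to a subsequence with $\|Tf_{n_k}\|_{\mathcal{Y}}\geq\varepsilon>0$, and then, by compactness, to a further subsequence along which $Tf_{n_k}\to g$ in the norm of $\mathcal{Y}$. Since $\mathcal{Y}$ is a functional Banach space, norm convergence implies pointwise convergence, so $(Tf_{n_k})(x)\to g(x)$ for every $x\in\D$. On the other hand, for fixed $x\in\D$ the map $f\mapsto(Tf)(x)$ is a fixed bounded evaluation functional ($f\mapsto f(\Ph(x))$ or $f\mapsto f'(\Ph(x))$) multiplied by a fixed scalar ($1$ or $\Ph'(x)$); since $f_n\to 0$ uniformly on compact subsets forces both $f_n\to 0$ and, by Cauchy's estimates, $f_n'\to 0$ pointwise on $\D$, we obtain $(Tf_{n_k})(x)\to 0$ for every $x$. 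Hence $g\equiv 0$, contradicting $\|g\|_{\mathcal{Y}}=\lim_k\|Tf_{n_k}\|_{\mathcal{Y}}\geq\varepsilon$.

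For sufficiency, assume the sequential condition and let $(h_n)$ be bounded in $\mathcal{X}$, say $\|h_n\|_{\mathcal{X}}\leq M$. Since point evaluation on $\mathcal{X}$ is bounded uniformly on compact subsets of $\D$, $\{h_n\}$ is a normal family, and Montel's theorem yields a subsequence $(h_{n_k})$ converging uniformly on compact subsets to an analytic function $h$. A Fatou-type argument applied to the integral means defining the $H^p$ norm of $h$ (resp.\ of $h'$) shows $h\in\mathcal{X}$; then $(h_{n_k}-h)$ is bounded in $\mathcal{X}$ and tends to $0$ uniformly on compact subsets, so by hypothesis $\|T(h_{n_k}-h)\|_{\mathcal{Y}}\to 0$, i.e.\ $Th_{n_k}\to Th$ in $\mathcal{Y}$. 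Thus the image of every bounded sequence in $\mathcal{X}$ has a norm-convergent subsequence, so $T$ is compact. The one point needing care is the claim that the locally uniform limit $h$ lies in $\mathcal{X}$: this is precisely the lower semicontinuity of the $S^p$ (resp.\ $H^p$) norm under uniform convergence on compact subsets, which follows from Fatou's lemma together with $h_{n_k}'\to h'$ locally uniformly; the remaining details are routine.
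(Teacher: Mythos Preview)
The paper does not supply its own proof of this proposition; it is quoted with attribution to \cite[Lemma 3.7]{Tjani:2003} and used as a black box. There is therefore nothing in the paper to compare your argument against.

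That said, your proof is correct and is exactly the standard normal-families argument one finds in Tjani's work and elsewhere: necessity via compactness plus pointwise identification of the limit (using that locally uniform convergence of $f_n$ forces $f_n'\to 0$ locally uniformly as well), and sufficiency via Montel's theorem together with lower semicontinuity of the $H^p$/$S^p$ norms under locally uniform convergence. One small remark: you announce that boundedness of $T$ (from Proposition~\ref{prop:boundedinto}) ``will be used in the sufficiency direction,'' but in fact your sufficiency argument only uses that $T$ maps $\mathcal{X}$ into $\mathcal{Y}$ and is linear---you never invoke a norm bound for $T$ itself. This is harmless, just slightly misleading.
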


\begin{theorem}\label{thm:Supnormequivcompact}
Let $\Ph$ be self-map of $\D$ with $\Ph\in S^p$. Then $\CDO:S^p\to S^p$ is compact if and only if $\|\Ph\|_{\infty}<1$.
\end{theorem}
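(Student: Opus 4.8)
The plan is to treat the two implications separately, with essentially all of the work on the sufficiency direction. For the implication ``$\CDO$ compact $\Rightarrow\|\Ph\|_\infty<1$'' there is nothing to do beyond a one-line observation: a compact operator is bounded, so if $\CDO:S^p\to S^p$ is compact then it is bounded, and Theorem~\ref{thm:bounded} immediately yields $\|\Ph\|_\infty<1$.

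For the implication ``$\|\Ph\|_\infty<1\Rightarrow\CDO$ compact'', I would set $r=\|\Ph\|_\infty<1$ and use the sequential criterion of Proposition~\ref{prop:compactreformulation}. So let $(f_n)$ be a bounded sequence in $S^p$ with $f_n\to 0$ uniformly on compact subsets of $\D$; the goal is $\|\CDO f_n\|_{S^p}\to 0$. The key step is that locally uniform convergence $f_n\to0$ forces every derivative $f_n^{(k)}$ to converge to $0$ uniformly on compact subsets: fixing $\rho$ with $r<\rho<1$ and applying the Cauchy integral formula on the circle $|z|=\rho$ gives
\[
\sup_{|w|\le r}\bigl|f_n^{(k)}(w)\bigr|\ \le\ \frac{k!\,\rho}{(\rho-r)^{k+1}}\,\sup_{|z|=\rho}|f_n(z)|\ \longrightarrow\ 0 .
\]
Since $\Ph\in H^\infty$ with $|\Ph(z)|\le r$ on $\D$ and $|\Ph(e^{i\theta})|\le r$ a.e., applying this with $k=1,2$ gives $f_n'(\Ph(0))\to0$ and $\varepsilon_n:=\sup_{z\in\D}\bigl|f_n''(\Ph(z))\bigr|\to0$ (the latter including the almost-everywhere boundary values).

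It then remains to estimate the $S^p$ norm directly. Writing
\[
\|\CDO f_n\|_{S^p}=\bigl|f_n'(\Ph(0))\bigr|+\bigl\|(f_n'\circ\Ph)'\bigr\|_{H^p}
=\bigl|f_n'(\Ph(0))\bigr|+\bigl\|(f_n''\circ\Ph)\,\Ph'\bigr\|_{H^p},
\]
and using the standing assumption $\Ph\in S^p$, i.e.\ $\Ph'\in H^p$, one obtains
\[
\|\CDO f_n\|_{S^p}\ \le\ \bigl|f_n'(\Ph(0))\bigr|+\varepsilon_n\,\|\Ph'\|_{H^p}\ \longrightarrow\ 0,
\]
which is precisely the conclusion required by Proposition~\ref{prop:compactreformulation}; hence $\CDO:S^p\to S^p$ is compact. (Note the hypothesis that $(f_n)$ is bounded in $S^p$ is not even needed for this direction.)

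I do not expect a genuine obstacle. The only point requiring a little care is the passage from locally uniform convergence of $(f_n)$ to uniform convergence of $(f_n'')$ on the compact set $\overline{\Ph(\D)}\subseteq\overline{D(0,r)}$, handled by the contour estimate above on a circle of radius strictly between $r$ and $1$, together with the remark that $\Ph\in H^\infty$ so its radial limits also lie in $\overline{D(0,r)}$. An alternative would be to first establish compactness analogues of the equivalences in Theorem~\ref{thm:boundedoperators} and reduce to compactness of $\DCO:H^p\to H^p$ via \cite{HibschweilerPortnoy:2005}, but the direct argument above is shorter and fully self-contained given Proposition~\ref{prop:compactreformulation}.
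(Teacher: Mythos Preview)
Your proposal is correct and follows essentially the same route as the paper: both directions match, with the converse handled via Proposition~\ref{prop:compactreformulation} by splitting $\|\CDO f_n\|_{S^p}=|f_n'(\Ph(0))|+\|(f_n''\circ\Ph)\Ph'\|_{H^p}$, using locally uniform convergence of derivatives on the compact set $\overline{\Ph(\D)}$, and bounding the second term by $\bigl(\sup|f_n''\circ\Ph|\bigr)\|\Ph'\|_{H^p}$. The only cosmetic differences are that the paper cites a textbook for the derivative-convergence step where you invoke the Cauchy estimate directly, and the paper phrases the final estimate as an $\varepsilon$--$N$ argument using the $\sup_{0<r<1}$ form of the $H^p$ norm rather than your cleaner $\varepsilon_n\|\Ph'\|_{H^p}$ bound.
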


\begin{proof}
For $\Ph$ as given, suppose $\CDO:S^p\to S^p$ is compact. Then $\CDO$ is bounded and hence $\|\Ph\|_{\infty}<1$ by Theorem \ref{thm:bounded}.  Conversely, suppose $\|\Ph\|_{\infty}<1.$ Then there is an $0<M<1$ such that $\overline{\Ph(\D)}\subseteq M\overline{\D}$. We will appeal to Proposition \ref{prop:compactreformulation} and hence we let $(f_n)$ be a bounded sequence in $S^p$ with $(f_n)\to 0$ uniformly on compact subsets of $\D$. We will show $(\|\CDO f_n\|_{S^p})\to 0$ in order to conclude that $\CDO:S^p\to S^p$ is compact. Recall \[\|\CDO f_n\|_{S^p}=|f_n'(\Ph(0))|+\|(\CDO f_n)'\|_{H^p}.\] By our hypothesis on $(f_n)$, we know that $(f_n')\to 0$ uniformly on compact subsets of $\D$ by \cite[Theorem V.1.6]{Lang:1993} and it follows immediately that $(|f_n'(\Ph(0))|)\to0$. We are left to confirm that the quantities $\|(\CDO f_n)'\|_{H^p}$ tend to zero as $n\to \infty$. Appealing to \cite[Theorem V.1.6]{Lang:1993} again, we know that $(f_n'')\to 0$ uniformly on compact subsets of $\D$. Fixing $\varepsilon >0$, we choose $N\in \N$ with $|f_n''(z)|\leq \varepsilon/2\|\Ph'\|_{H^p}$ for $n\geq N$ and $z\in M\overline{\D}.$  Thus, if $n\geq N$ and $0<r<1$, we have \[\begin{aligned}\int_0^{2\pi}|f_n''(\Ph(re^{i\theta}))|^p|\Ph'(re^{i\theta})|^p\,\frac{d\theta}{2\pi}\leq \frac{\varepsilon^p}{2^p\|\Ph'\|_{H^p}^p}\int_0^{2\pi}|\Ph'(re^{i\theta})|^p\,\frac{d\theta}{2\pi}<\varepsilon^p.
\end{aligned}\] It follows that for $n\geq N$ we have \[\|(\CDO f_n)'\|_{H^p}^p=\sup_{0<r<1}\int_0^{2\pi}|f_n''(\Ph(re^{i\theta}))|^p|\Ph'(re^{i\theta})|^p\,\frac{d\theta}{2\pi}<\varepsilon^{p}\] and our conclusion follows.
\end{proof}

We immediately have the following corollary, which provides an interesting contrast to the parallel result for bounded $\CDO:S^p\to S^p$ and $\CO:S^p\to S^p$ given in Corollary \ref{cor:bounded}.

\begin{cor}
Let $\Ph$ be an analytic self-map of $\D$ with $\Ph\in S^p$.  Then $\CDO:S^p\to S^p$ is compact if and only if $\CO:S^p\to S^p$ is compact.
\end{cor}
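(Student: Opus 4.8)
The plan is to chain together two characterizations, each of which identifies the relevant compactness with the single geometric condition $\|\Ph\|_\infty<1$, so that the corollary falls out by transitivity. First I would apply Theorem~\ref{thm:Supnormequivcompact}: under the standing hypothesis $\Ph\in S^p$, the operator $\CDO:S^p\to S^p$ is compact if and only if $\|\Ph\|_\infty<1$. Second, I would invoke Shapiro's characterization \cite[Theorem 2.1]{Shapiro} (already recalled in the discussion preceding Proposition~\ref{prop:compactreformulation}), which says that $\CO:S^p\to S^p$ is compact if and only if $\|\Ph\|_\infty<1$. Reading the first biconditional forward and the second backward then yields
\[
\CDO:S^p\to S^p \text{ compact}\iff \|\Ph\|_\infty<1\iff \CO:S^p\to S^p \text{ compact},
\]
which is exactly the assertion.

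There is essentially no obstacle here, since the content is carried entirely by the two cited equivalences; the only point requiring a word of care is that both Theorem~\ref{thm:Supnormequivcompact} and Shapiro's result are being applied under the assumption $\Ph\in S^p$, which is precisely the standing hypothesis in the statement of the corollary, so neither equivalence is invoked outside its stated range. (Recall that $\|\Ph\|_\infty<1$ alone does not force $\Ph\in S^p$, so this remark is not vacuous, but it does not affect the argument.)

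Finally, I would add a sentence contrasting this with Corollary~\ref{cor:bounded}: there the implication between boundedness of $\CDO$ and of $\CO$ runs only one way, whereas here the strong boundary regularity of the $S^p$ spaces forces both compactness conditions to collapse to the same geometric criterion, so the equivalence is two-sided.
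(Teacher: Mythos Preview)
Your proposal is correct and matches the paper's intent exactly: the corollary is stated as an immediate consequence of Theorem~\ref{thm:Supnormequivcompact} together with Shapiro's characterization \cite[Theorem 2.1]{Shapiro}, just as you argue. The paper gives no explicit proof beyond the phrase ``We immediately have the following corollary,'' so your transitivity argument is precisely what is being invoked.
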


\noindent We also have an analog of Theorem \ref{thm:boundedoperators} for compactness.

\begin{theorem} \label{thm:compactequiv}
Let $\Ph$ be a self-map of $\D$ with $\Ph\in S^p$. The following are equivalent.

\begin{enumerate}
\item $\CDO:S^p\to S^p$ is compact.

\item $\CO:H^p\to S^p$ is compact.

\item $\DCO:H^p\to H^p$ is compact.

\end{enumerate}
\end{theorem}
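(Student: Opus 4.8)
The plan is to run the same cyclic chain $(i)\Rightarrow(ii)\Rightarrow(iii)\Rightarrow(i)$ used for Theorem~\ref{thm:boundedoperators}, but with the into-boundedness mechanism of Proposition~\ref{prop:boundedinto} replaced by the sequential compactness criterion of Proposition~\ref{prop:compactreformulation}. At the outset I would note that each of $(i)$, $(ii)$, $(iii)$ forces the corresponding operator to be bounded, so Theorem~\ref{thm:boundedoperators} makes all three operators bounded simultaneously and Proposition~\ref{prop:compactreformulation} is legitimately available for each of them throughout the argument.

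For $(i)\Rightarrow(ii)$, let $(g_n)$ be bounded in $H^p$ with $g_n\to 0$ uniformly on compact subsets of $\D$, and set $f_n(z)=\int_0^z g_n(w)\,dw$, so $f_n'=g_n$ and $f_n(0)=0$. Then $\|f_n\|_{S^p}=\|g_n\|_{H^p}$ shows $(f_n)$ is bounded in $S^p$, and integrating along the segment from $0$ to $z$ shows $f_n\to 0$ uniformly on compact subsets of $\D$. Since $\CDO f_n=f_n'\circ\Ph=g_n\circ\Ph=\CO g_n$, compactness of $\CDO$ gives $\|\CO g_n\|_{S^p}\to 0$, and Proposition~\ref{prop:compactreformulation} yields compactness of $\CO:H^p\to S^p$.

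For $(ii)\Rightarrow(iii)$, let $(f_n)$ be bounded in $H^p$ with $f_n\to 0$ uniformly on compact subsets; since $\Ph$ is continuous it carries compact subsets of $\D$ to compact subsets of $\D$, so $\CO f_n=f_n\circ\Ph\to 0$ uniformly on compact subsets and $(\CO f_n)$ is bounded in $S^p$. Compactness of $\CO:H^p\to S^p$ gives $\|\CO f_n\|_{S^p}\to 0$, and because $\|\DCO f_n\|_{H^p}=\|(f_n\circ\Ph)'\|_{H^p}\le\|\CO f_n\|_{S^p}$ we get $\|\DCO f_n\|_{H^p}\to 0$; apply Proposition~\ref{prop:compactreformulation}. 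For $(iii)\Rightarrow(i)$, let $(f_n)$ be bounded in $S^p$ with $f_n\to 0$ uniformly on compact subsets. Then $(f_n')$ is bounded in $H^p$ and, by \cite[Theorem V.1.6]{Lang:1993}, tends to $0$ uniformly on compact subsets; compactness of $\DCO:H^p\to H^p$ forces $\|(f_n'\circ\Ph)'\|_{H^p}=\|\DCO f_n'\|_{H^p}\to 0$, while $|f_n'(\Ph(0))|\to 0$ by pointwise convergence, so $\|\CDO f_n\|_{S^p}=|f_n'(\Ph(0))|+\|(f_n'\circ\Ph)'\|_{H^p}\to 0$ and Proposition~\ref{prop:compactreformulation} finishes the cycle.

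The identifications $\CDO f=\CO(f')$ on $S^p$ and $(\CO f)'=\DCO f$, together with the norm identity $\|h\|_{S^p}=|h(0)|+\|h'\|_{H^p}$, are exactly the bridges that made Theorem~\ref{thm:boundedoperators} work, and they do the same job here. The only steps needing genuine care are the transfers of the ``uniformly on compact subsets'' hypothesis: passing from $(g_n)$ to its antiderivatives is an elementary segment estimate, and passing from $(f_n)$ to $(f_n')$ is precisely \cite[Theorem V.1.6]{Lang:1993}; this is where the boundary regularity that distinguishes $S^p$ quietly enters. I expect no serious obstacle here --- the main thing to be disciplined about is the bookkeeping that upgrades whichever hypothesis one starts from to boundedness of all three operators, so that the reformulation in Proposition~\ref{prop:compactreformulation} applies.
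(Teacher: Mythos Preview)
Your argument is correct. The implications $(i)\Rightarrow(ii)$ and $(ii)\Rightarrow(iii)$ match the paper's proof essentially line for line (the paper omits your side remark that $\CO f_n\to 0$ uniformly on compacta, which is indeed unnecessary, but harmless). The difference lies in how the loop is closed. The paper does not prove $(iii)\Rightarrow(i)$ directly; instead it notes at the outset that $(i)$ and $(iii)$ are each equivalent to $\|\Ph\|_\infty<1$ by Theorem~\ref{thm:Supnormequivcompact} and \cite[Corollary~1]{HibschweilerPortnoy:2005}, so that only the chain $(i)\Rightarrow(ii)\Rightarrow(iii)$ needs to be supplied. Your direct $(iii)\Rightarrow(i)$ step---pass from $(f_n)$ to $(f_n')$ via \cite[Theorem V.1.6]{Lang:1993}, apply compactness of $\DCO$ to that sequence, and reassemble the $S^p$ norm of $\CDO f_n$---is a clean alternative that makes the equivalence self-contained: it avoids both the supremum-norm characterization and the external Hibschweiler--Portnoy citation. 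The paper's route is shorter only because those results are already available earlier in the section.
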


\begin{proof}
We know that (i) and (iii) are logically equivalent by Theorem \ref{thm:Supnormequivcompact} and \cite[Corollary 1]{HibschweilerPortnoy:2005} since both are equivalent to the condition $\|\Ph\|_{\infty}<1$. It is left to show that (ii) is also equivalent. We first show (i)$\Rightarrow$(ii). Suppose $D_\Ph:S^p\to S^p$ is compact. To show $\CO:H^p\to S^p$ is compact, we will appeal to Proposition \ref{prop:compactreformulation}. To that end, let $(f_n)$ be a bounded sequence in $H^p$ such that $(f_n)\to 0$ uniformly on compact subsets of $\D$.  For $n\in \N$, let $g_n\in S^p$ with $g_n'=f_n$, i.e. take \[g_n(z)=\int_0^z f_n(w)\,dw.\]  Then $(g_n)$ is a bounded sequence in $S^p$ since $\|g_n\|_{S^p}=|g_n(0)|+\|g_n'\|_{H^p}=\|f_n\|_{H^p}$. The hypothesis on $(f_n)$ guarantees that $(g_n)\to 0$ uniformly on compact subsets of $\D$.  Thus $(\|\CDO g_n\|_{S^p})\to 0$ by Proposition \ref{prop:compactreformulation}.  Also, we have \[\|\CO f_n\|_{S^p}=\|\CO g_n'\|_{S^p}=\|g_n'\circ\Ph\|_{S^p}=\|\CDO g_n\|_{S^p}\] and hence $(\|\CO f_n\|_{S^p})\to 0$. Thus $\CO:H^p\to S^p$ is compact by Proposition \ref{prop:compactreformulation}.

To complete the proof, we show (ii)$\Rightarrow$(iii). Suppose $\CO:H^p\to S^p$ is compact and let $(f_n)$ be a bounded sequence in $H^p$ with $(f_n)\to 0$ uniformly on compact subsets of $\D$. Then, since $\CO:H^p\to S^p$ is compact, $(\|\CO f\|_{S^p})\to 0$ by Proposition \ref{prop:compactreformulation}.  But \[\|\DCO f\|_{H^p}=\|(f\circ \Ph)'\|_{H^p}\leq |f(\Ph(0))|+\|(f\circ \Ph)'\|_{H^p}=\|\CO f\|_{S^p}\] and hence $(\|\DCO f\|_{H^p})\to 0$.  Thus $\DCO:H^p\to H^p$ is compact completing the proof.
\end{proof}

\section{Spectrum}
Next we present  two results on the spectrum of $D_{\varphi}$ acting on $S^p$ in the case that $\varphi$ is a special symbol. 

Since a self-map of the disk $\Ph$ with $\|\varphi\|_\infty < 1$ induces a compact operator $D_\varphi$ on $S^p$ by Theorem \ref{thm:Supnormequivcompact}, as it does on the Hardy space $H^2$, the proof of the following theorem follows exactly as presented in \cite[Proposition 3]{FatehiHammond:2020}. 

\begin{theorem}\label{Theorem:SpectrumLinear}
If $\varphi(z) = az+b$, for $0 < |a| < 1-|b|$, then the spectrum of $D_\varphi$ on $S^p$ is $\{0\}$.
\end{theorem}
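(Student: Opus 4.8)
The plan is to use the compactness established in Section~3 to reduce the computation of the spectrum to finding the eigenvalues, and then to rule out every nonzero eigenvalue by a growth estimate on the Taylor coefficients of a hypothetical eigenfunction at the fixed point of $\Ph$. Since $\|\Ph\|_\infty=|a|+|b|<1$ when $0<|a|<1-|b|$, Theorem~\ref{thm:Supnormequivcompact} shows that $D_\Ph$ is compact on $S^p$. The constant functions lie in $S^p$ and in $\ker D_\Ph$, so $0\in\sigma(D_\Ph)$, and by the Riesz theory of compact operators every nonzero point of $\sigma(D_\Ph)$ is an eigenvalue. Thus it suffices to show that $D_\Ph$ has no eigenvalue $\lambda\neq 0$.

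Suppose $f\in S^p$, $f\not\equiv 0$, and $D_\Ph f=\lambda f$ with $\lambda\neq 0$; that is, $f'(\Ph(z))=\lambda f(z)$ for all $z\in\D$. Because $|a|+|b|<1$, the point $w_0=b/(1-a)$ lies in $\D$, is the fixed point of $\Ph$, and satisfies $\Ph'(w_0)=a$ with $0<|a|<1$. In the shifted variable $u=z-w_0$ the affine map $\Ph$ becomes $u\mapsto au$, so writing $F(u)=f(w_0+u)=\sum_{n\ge 0}c_nu^n$ (a series with radius of convergence at least $1-|w_0|>0$), the eigenvalue equation becomes $F'(au)=\lambda F(u)$. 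Comparing coefficients of $u^n$ yields $(n+1)a^nc_{n+1}=\lambda c_n$, hence $c_n=c_0\,\lambda^n/\bigl(n!\,a^{n(n-1)/2}\bigr)$ for every $n\in\N$.

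If $c_0=f(w_0)=0$ then all $c_n$ vanish and $f\equiv 0$ on $\D$, a contradiction; so $c_0\neq 0$. But then, since $0<|a|<1$, the factor $|a|^{-n(n-1)/2}$ grows faster than any geometric sequence, so $|c_n|^{1/n}\to\infty$ and $\sum_n c_nu^n$ has radius of convergence $0$, contradicting the analyticity of $F$ near $0$. Hence $D_\Ph$ has no nonzero eigenvalue and $\sigma(D_\Ph)=\{0\}$. The only step that requires care is the coefficient bookkeeping that exhibits this super-geometric growth; the reduction to the eigenvalue problem, the location of the fixed point in $\D$, and the degenerate possibilities ($b=0$, or $f$ a constant) are all routine and are absorbed into the same computation.
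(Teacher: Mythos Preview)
Your proof is correct and follows the same strategy the paper invokes: the paper does not give a self-contained argument but simply notes that $\|\Ph\|_\infty<1$ forces $D_\Ph$ to be compact on $S^p$ and then cites \cite[Proposition~3]{FatehiHammond:2020} for the eigenvalue analysis. Your argument supplies precisely those cited details---compactness reduces the question to ruling out nonzero eigenvalues, and the Taylor recursion $c_n=c_0\lambda^n/(n!\,a^{n(n-1)/2})$ at the interior fixed point $w_0=b/(1-a)$ forces any nontrivial eigenfunction to have zero radius of convergence---so the two approaches coincide.
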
 

We now turn our attention to the class of symbols $\varphi(z)=az^M$ for $0<|a|<1$ and $M\in\mathbb{N}$. We will consider the case for which $M\geq 2$. The  argument for the following theorem is identical to that for the Dirichlet space in \cite{AllenHellerPons:2021} so we present a shortened version. 

\begin{theorem}\label{Theorem:SpectralResults}
If $\varphi(z) = az^M$, for $0 < |a| < 1$ and $M$ in $\N$, then the spectrum of $D_\varphi$ on $S^p$ is given by \[\sigma(D_\varphi) = \begin{cases}\hfil \{0, 2a\} & \text{if $M = 2$}\\
\hfil\{0\} & \text{otherwise.}
\end{cases}
\]
\end{theorem}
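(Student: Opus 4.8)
The plan is to combine compactness with a direct eigenvalue computation on Taylor coefficients, following the Dirichlet-space argument in \cite{AllenHellerPons:2021}. Since $\varphi(z)=az^M$ is a polynomial it lies in $S^p$, and since $\|\varphi\|_\infty=|a|<1$, Theorem~\ref{thm:Supnormequivcompact} shows that $D_\varphi$ is compact on $S^p$. Hence $\sigma(D_\varphi)$ is the union of $\{0\}$ (which belongs to the spectrum because $S^p$ is infinite dimensional) with the set of nonzero eigenvalues of $D_\varphi$, and it remains only to locate the latter. The case $M=1$ is Theorem~\ref{Theorem:SpectrumLinear} with $b=0$, so I would assume $M\geq 2$.

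For the eigenvalue computation I would write $f(z)=\sum_{n\geq 0}c_n z^n\in S^p$ and compute
\[
D_\varphi f(z)=f'(az^M)=\sum_{k\geq 0}(k+1)\,a^k c_{k+1}\,z^{Mk}.
\]
Matching coefficients in $D_\varphi f=\lambda f$ with $\lambda\neq 0$ forces $c_n=0$ whenever $n$ is not a multiple of $M$, together with the recursion $\lambda c_{Mk}=(k+1)a^k c_{k+1}$ for $k\geq 0$; taking $k=0$ and using $c_1=0$ (valid since $M\geq 2$) also gives $c_0=0$. Now I would split on $M$. If $M\geq 3$ and $f\neq 0$, choose $n$ with $c_n\neq 0$; then $n$ is a positive multiple of $M$, the recursion at index $n$ forces $c_{n/M+1}\neq 0$, so $n/M+1$ is again a positive multiple of $M$, and since $n\geq M>M/(M-1)$ we get $n/M+1<n$. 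Iterating yields a strictly decreasing infinite sequence of positive multiples of $M$, which is impossible; hence $D_\varphi$ has no nonzero eigenvalue and $\sigma(D_\varphi)=\{0\}$. If $M=2$, the recursion at $k=1$ reads $(\lambda-2a)c_2=0$, while a short strong induction on $k$ (using that the odd-indexed coefficients vanish and that $c_{k+1}$ carries an index strictly below $2k$ for $k\geq 2$) shows $c_{2k}=0$ for every $k\neq 1$ once $c_0=c_1=0$. Therefore, when $\lambda\notin\{0,2a\}$ we also get $c_2=0$ and $f\equiv 0$, whereas $\lambda=2a\neq 0$ is genuinely an eigenvalue, witnessed by $D_\varphi(z^2)=2\varphi(z)=2az^2$. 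This gives $\sigma(D_\varphi)=\{0,2a\}$.

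The one place I expect to have to be careful is the case split: checking that the recursion really forces the predecessor index $n/M+1$ to be a nonzero multiple of $M$ and that this index strictly decreases exactly when $M\geq 3$, together with the parallel check for $M=2$ that no coefficient beyond $c_2$ can survive. The remaining manipulations are routine power-series bookkeeping.
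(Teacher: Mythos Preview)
Your proof is correct and follows essentially the same strategy as the paper: compactness (via Theorem~\ref{thm:Supnormequivcompact}) reduces the question to locating nonzero eigenvalues, which are then analyzed through the Taylor data of a putative eigenfunction. The paper phrases this analysis in terms of the iterated derivatives of $f'(\varphi(z))$ at $0$ using that $\varphi^{(\ell)}(0)=0$ for $\ell\neq M$, while you write out the equivalent power-series recursion $\lambda c_{Mk}=(k+1)a^{k}c_{k+1}$ and run a descent/induction on the indices; these are the same computation organized slightly differently.
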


\begin{proof}
By the Spectral Theorem for Compact Operators, the spectrum is countable, contains 0, and any nonzero element is an eigenvalue. If we assume the existence of a nonzero eigenvalue $\lambda$ of $D_{\varphi}$, then any associated eigenfunction $f$ satisfies
$$
\frac{d^n}{dz^n}[\lambda f(z)]=\frac{d^n}{dz^n}[f'(\varphi(z))]
$$
for all $z\in\mathbb{D}$ and $n\geq 0$.

Of particular interest is the relation
\begin{align*}
\left.\frac{d^n}{dz^n}[\lambda f(z)]\right\rvert_{z=0} & =\left.\frac{d^n}{dz^n}[f'(\varphi(z))]\right\rvert_{z=0}
\intertext{or}
f^{(n)}(0) &= \frac{1}{\lambda}\left.\frac{d^n}{dz^n}[f'(\varphi(z))]\right\rvert_{z=0}.
\end{align*}


We consider the three different cases $n<M$, $n=M$, and $n>M$ to argue that $f^{(n)}(0)=0$ unless $n=M=2$ and $\lambda=2a$.

Observe that $\varphi(0)=0$ and $\varphi^{(\ell)}(0)=0$ if $\ell\neq M$ so we only need to consider the portion of $\left.\frac{d^n}{dz^n}[f'(\varphi(z))]\right\rvert_{z=0}$ which contains $\varphi^{(M)}(0)$. Note that when $n<M$, $\left.\frac{d^n}{dz^n}[f'(\varphi(z))]\right\rvert_{z=0}=0$ as this expression only contains $\varphi^{(\ell)}(0)$ for $\ell< M$. Therefore $f^{(n)}(0)=0$ whenever $n<M$.

When $n=M$, a quick computation shows that 
$$
\left.\frac{d^n}{dz^n}[f'(\varphi(z))]\right\rvert_{z=0}=f''(0)\varphi^{(M)}(0)
$$
Therefore, 
$$
f^{(M)}(0)=\frac{1}{\lambda}f''(0)\varphi^{(M)}(0)
$$

\noindent If $M=2$, we have
$$
f''(0)=\frac{M! a}{\lambda}f''(0)
$$
and so either $\lambda=M!a$ or $f''(0)=0$. If $M>2$, then $f''(0)=0$ by the previous case and so $f^{(M)}(0)=0$.

Finally suppose $n>M$. Recall we only need to consider the piece of $\left.\frac{d^n}{dz^n}[f'(\varphi(z))]\right\rvert_{z=0}$ which contains $\varphi^{(M)}(0)$, but the $\varphi^{(M)}(0)$ will only appear in products containing $\varphi^{(\ell)}(0)$ for $\ell\neq M$. Therefore again we see that $f^{(n)}(0)=0$ whenever $n>M$. Combining these three cases yields the desired result.
\end{proof}

\section{Norm}\label{Subsection:Norm}

We will consider the norm of the operator $D_\varphi$ acting on $S^2$ when $\varphi(z) = az^M$ for $0 < |a| < 1$ and $M\in\N$. Note that any such $\varphi$ will induce a compact operator on $S^2$ by Theorem \ref{thm:Supnormequivcompact}. Similar results were shown to hold on the Hardy space $H^2$ and on the Dirichlet space $\mathcal{D}$ by \cite{FatehiHammond:2020} and \cite{AllenHellerPons:2021}, respectively. Our argument is similar to that of the Dirichlet space and we include it for completeness.

For $0<|a|<1$ and $M\in\N$, we define constants 
\[\nu = \left\lfloor\frac{2-|a|}{1-|a|}\right\rfloor\]
and 
\[\mathcal{N}_M = \max\left\{1, M(\nu-1)|a|^{\nu-1}\right\}.\]

\begin{theorem} If $\varphi(z) = az^M$ for $0 < |a| < 1$ and $M$ in $\N$, then $\|D_\varphi\| = \mathcal{N}_M$.
\end{theorem}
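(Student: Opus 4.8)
The plan is to compute $\|D_\varphi\|$ on the Hilbert space $S^2$ — with the norm $\|f\|_{S^2}^2 = |f(0)|^2 + \sum_{n\ge1}|a_n|^2 n^2$ — by diagonalizing $D_\varphi$ against the monomials and then solving a one-variable extremal problem in the exponent. First I would record the action of $D_\varphi$ on monomials: since $D_\varphi f = f'\circ\varphi$ and $\varphi(z) = az^M$, it sends $z^n \mapsto na^{n-1}z^{M(n-1)}$ for $n\ge1$ and $1\mapsto 0$. Writing $\beta(0)=1$, $\beta(n)=n$ ($n\ge1$) for the weight sequence of $S^2$ and $e_n = z^n/\beta(n)$ for the associated orthonormal basis, this reads $D_\varphi e_0 = 0$ and $D_\varphi e_n = w_n\, e_{M(n-1)}$ for $n\ge1$, where $w_1 = 1$, and, using that $M(n-1)\ge M\ge1$ forces $\beta(M(n-1)) = M(n-1)$, one gets $w_n = n|a|^{n-1}\beta(M(n-1))/\beta(n) = M(n-1)|a|^{n-1}$ for $n\ge2$.

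The crucial point is that the target indices $M(n-1)$, $n\ge1$, are pairwise distinct, so $D_\varphi$ carries the orthonormal basis $(e_n)$ to a set of pairwise orthogonal vectors. Consequently, for $x = \sum_{n\ge0} x_n e_n \in S^2$ Parseval gives $\|D_\varphi x\|^2 = \sum_{n\ge1}|x_n|^2 w_n^2 \le \bigl(\sup_{n\ge1} w_n\bigr)^2\|x\|^2$, and since $\|D_\varphi e_n\| = w_n$ this bound is attained; hence $\|D_\varphi\| = \sup_{n\ge1} w_n = \max\{1,\ \sup_{k\ge1} Mk|a|^k\}$ after reindexing by $k = n-1$. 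That $D_\varphi$ is bounded, so this supremum is finite, is already guaranteed by Theorem \ref{thm:bounded} (applied with $p=2$, noting $\|\varphi\|_\infty = |a|<1$).

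It then remains to evaluate $\sup_{k\ge1} b_k$ with $b_k := Mk|a|^k$. From $b_{k+1}/b_k = (k+1)|a|/k$ one checks $b_{k+1}\ge b_k \iff k \le |a|/(1-|a|)$, so the sequence $(b_k)$ rises then strictly falls and is maximized at $k^\ast := \lfloor |a|/(1-|a|)\rfloor + 1$. Since $|a|/(1-|a|) = 1/(1-|a|) - 1$ and $\nu = \lfloor(2-|a|)/(1-|a|)\rfloor = 1 + \lfloor 1/(1-|a|)\rfloor$, this gives $k^\ast = \lfloor 1/(1-|a|)\rfloor = \nu-1$, whence $\sup_{k\ge1} b_k = M(\nu-1)|a|^{\nu-1}$. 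Substituting back yields $\|D_\varphi\| = \max\{1, M(\nu-1)|a|^{\nu-1}\} = \mathcal{N}_M$.

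I do not anticipate a genuine obstacle; the argument is short once the orthogonality of $\{D_\varphi e_n\}$ is noticed — and this is essentially why the Hilbert-space $S^2$ norm, rather than the equivalent Banach $S^p$ norm, is the relevant one here. The only delicate points are the degenerate behaviour of the weight $w_n$ at the low indices $n=0,1$ (handled above, where $\beta$ departs from $n\mapsto n$) and the floor-function bookkeeping identifying $k^\ast$ with $\nu-1$, including the borderline case where $1/(1-|a|)$ is an integer: then $b_{\nu-2} = b_{\nu-1}$, so both indices realize the maximum, but its value — and hence the stated norm — is unchanged.
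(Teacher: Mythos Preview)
Your argument is correct and follows essentially the same route as the paper: both diagonalize $D_\varphi$ against the orthonormal monomial basis $(e_n)$, observe that the images land in distinct basis directions so that Parseval reduces the norm computation to $\sup_n \|D_\varphi e_n\|$, and then solve the same one-variable discrete extremal problem to identify the maximizing index as $\nu-1$. The only cosmetic slip is that $D_\varphi e_n = M(n-1)a^{n-1}e_{M(n-1)}$ with the complex factor $a^{n-1}$ rather than $|a|^{n-1}$, but since only $\|D_\varphi e_n\|$ enters the argument this does not affect anything.
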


\begin{proof}
For $\varphi$ as given, we will argue $\|D_\varphi\| \geq \mathcal{N}_M$ and $\|D_\varphi\| \leq \mathcal{N}_M$. To obtain the lower bound we first consider the orthonormal basis functions $(e_n)_{n\geq 0}$ defined by $e_0(z) = 1$ and $e_n(z) = z^n/n$, $n\geq 1$. For $z\in \D$, note $e_0'(\varphi(z))=0$,
\[e_1'(\varphi(z)) = 1 = e_0(z),\]
and, for $n \geq 2$,
\[e_n'(\varphi(z)) =  a^{n-1}z^{M(n-1)} =  M(n-1)a^{n-1}e_{M(n-1)}(z).\] Thus \[\|D_\varphi\| \geq \max\left\{1,\sup_{n \geq 2}M(n-1)|a|^{n-1}\right\}.\] To verify the lower bound we must show \[\sup_{n \geq 2}M(n-1)|a|^{n-1} = M(\nu-1)|a|^{\nu-1}.\] Note that the function $g(x) = M(x-1)|a|^{x-1}$ has exactly one critical point in $(1,\infty)$. This point is a local maximum and hence also the absolute maximum of $g$ on $(1,\infty)$.  Thus the supremum we wish to compute is a maximum and will occur at the greatest integer $n \geq 2$ such that 
\[(n-2)|a|^{n-2} \leq (n-1)|a|^{n-1};\] equivalently,
\[n \leq \frac{2-|a|}{1-|a|}.\]
Thus, \begin{equation}\label{eqn:supremum}\sup_{n \geq 2}M(n-1)|a|^{n-1} = M(\nu-1)|a|^{\nu-1},\end{equation} which implies $\|D_\varphi\| \geq \mathcal{N}_M$. 

For the upper estimate, let $f(z) = \sum_{n=0}^{\infty} b_nz^n= b_0e_0(z)+\sum_{n=1}^{\infty}b_nne_n(z)$ be in $S^2$. Consider
\[\begin{aligned}(D_\varphi f)(z)&= \sum_{n=1}^{\infty} b_nne_n'(\varphi(z))\\
&=b_1+\sum_{n=2}^{\infty}b_nn\left(M(n-1)a^{n-1}e_{M(n-1)}(z)\right).
\end{aligned}\] With this and \myeqref{eqn:supremum}, we have  
\[\begin{aligned}
\|D_\varphi f\|_{S^2}^2 &= |b_1|^2+\sum_{n=2}^{\infty}|b_n|^2n^2\left(M(n-1)|a|^{n-1}\right)^2\\
&\leq |b_1|^2+\sum_{n=2}^{\infty}|b_n|^2n^2\left(M(\nu-1)|a|^{\nu-1}\right)^2\\
&\leq \mathcal{N}_M^2\left(|b_1|^2+\sum_{n=2}^{\infty}|b_n|^2n^2\right) \\
&\leq \mathcal{N}_M^2 \|f\|_{S^2}^2\\
\end{aligned}\] and therefore $\|D_\varphi\|\leq \mathcal{N}_M$ as desired.
\end{proof}

Considering the quantities in the theorem, we can understand how the norm changes with respect to $|a|$.  When $M=1$, $\|D_\varphi\|=1$ when $0<|a|\leq3^{-1/3}$ and $\|D_\varphi\|>1$ for $3^{-1/3}<|a|<1$. For $M\geq 2$, $\|D_\varphi\|=1$ when $0<|a|\leq1/M$ and $\|D_\varphi\|>1$ for $1/M<|a|<1$. In both cases, the norm tends to infinity as $|a|\to1$. 

\begin{figure}[H]
\begin{center}
\resizebox {.9\textwidth} {\height} {
\begin{tikzpicture}[baseline]
    \begin{axis}[
	    axis x line=middle,
		axis y line=middle,
		axis on top,
		width=.75\textwidth,
		ylabel={\footnotesize $\|D_\varphi\|$},
		xlabel={\footnotesize $|a|$},
		ytick={1,3},
		yticklabels={1,3},
		xtick={0,0.33333,.5,0.69336127,1},
		xticklabels={0,$\frac{1}{3}$,$\frac{1}{2}$,$\frac{1}{\sqrt[3]{3}}$,$1$},
		xtick align=outside,
		xmin=0, xmax=1.05, ymin=0, ymax=4.25,
		]
		\addplot[smooth,black,ultra thick,domain=0:0.69336127] {1};
		\addplot[smooth,black,thick,domain=0.69336127:0.95] {floor((2-x)/(1-x))-1)*x^(floor((2-x)/(1-x))-1)};
		\addplot[smooth,black,thick,domain=0.5:0.95] {2*(floor((2-x)/(1-x))-1)*x^(floor((2-x)/(1-x))-1)};
		\addplot[smooth,black,thick,domain=0.33333:0.95] {3*(floor((2-x)/(1-x))-1)*x^(floor((2-x)/(1-x))-1)};
		\node at (axis cs:0.69336,1) [anchor=north] {\scriptsize $M$=$1$};
		\node at (axis cs:.5,1) [anchor=north] {\scriptsize $M$=$2$};
		\node at (axis cs:0.33333,1) [anchor=north] {\scriptsize $M$=$3$};
	\end{axis}
\end{tikzpicture}
}
\caption{Norm of $D_\varphi$ for $\varphi(z) = az^M$, as a function of $|a|$, with $M=1,2,3$.}\label{Figure:NormDphi}
\end{center}
\end{figure}
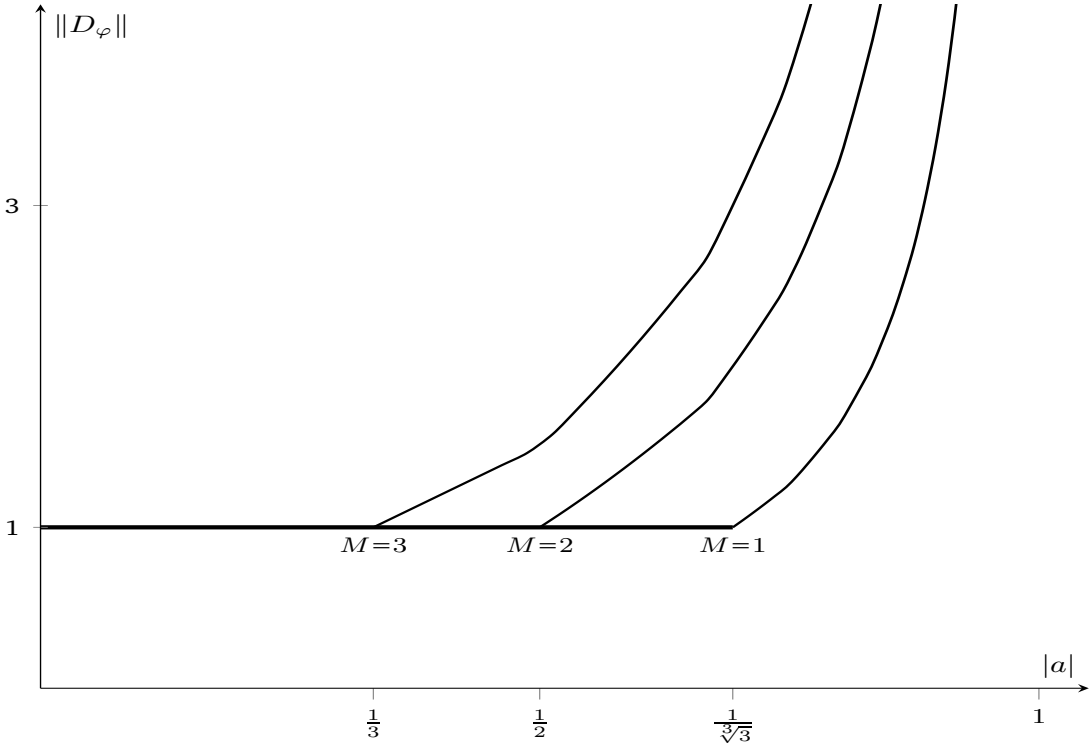

\section{Adjoints}

Finally, in this section, we consider the adjoint of a composition-differentiation operator on several spaces. First recall the definition of the standard weighted Bergman space: for $\alpha >-1$, $A_{\alpha}^2$ is given by \[\left\{f \textup{ analytic in } \D: \|f\|_{\alpha}^2=(\alpha+1)\int_\D|f(z)|^2(1-|z|^2)^{\alpha}\,dA(z)<\infty\right\},\] where $dA$ is normalized Lebesgue area measure. The reproducing kernels for $A_{\alpha}^2$ are given by 
\[
K_w(z)=\frac{1}{(1-\overline{w}z)^{\alpha +2}}
\]
while the kernels for evaluation of the first derivative are given by 
\begin{equation}\label{eqn:kerderbergman}
K_w^{(1)}(z)=\frac{(\alpha+2)z}{(1-\overline{w}z)^{\alpha+3}}.
\end{equation}

Let $\varphi(z)=(az+b)/(cz+d)$ be a linear fractional self-map of the unit disk and let $\sigma(z)=(\overline{a}z-\overline{c})/(-\overline{b}z+\overline{d})$ be the Krein adjoint of $\varphi$. It is known that $\sigma$ is also a self-map of the disk. Throughout this section we will assume $\|\varphi\|_{\infty}<1$ so that $\CDO$ is bounded (even compact) on $A_{\alpha}^2$; see \cite{HibschweilerPortnoy:2005}.

For an analytic function $\psi:\mathbb{D}\rightarrow\mathbb{C}$, we define the Toeplitz operator $T_{\psi}$ by $T_{\psi}(f)=\psi \cdot f$ for $f$ in some functional Hilbert space of analytic functions. It is well known that $T_{\psi}$ is bounded on any weighted Bergman space if and only if $\psi$ is in $H^{\infty}$ and any self-map of $\D$ will induce a bounded composition operator on a weighted Bergman space. It is also well known that $C_{\varphi}^*(K_w)=K_{\varphi(w)}$ and $T_{\psi}^*(K_w)=\overline{\psi(w)}K_w$ for any $w\in\mathbb{D}$.
Fatehi and Hammond showed that $\CDO^*(K_w(z))=K^{(1)}_{\varphi(w)}(z)$ (see \cite{FatehiHammond:2020}) whenever $\CDO$ is bounded on $H^2$ and this behavior persists on other Hilbert spaces.

We first show that the adjoint formula $D_{\varphi}^*T_{K^{(1)}_{\sigma(0)}}^*=T_{K^{(1)}_{\varphi(0)}}D_{\sigma}$ previously shown to hold on the Hardy space $H^2$ \cite[Theorem 1]{FatehiHammond:2020} in fact holds on any weighted Bergman space $A^2_{\alpha}$ for $\alpha>-1$. For the Hardy space, the kernels appearing as multipliers are \begin{equation}\label{eqn:kerderhardy}K_w^{(1)}=\frac{z}{(1-\overline{w}z)^2}.\end{equation}

\begin{theorem} \label{thm:adjointBergman}
Let $\varphi$ be a linear fractional self-map of $\mathbb{D}$ with $\|\Ph\|<1$. If $\alpha>-1$, then on $A^2_{\alpha}$, $D_{\varphi}^*T_{K^{(1)}_{\sigma(0)}}^*=T_{K^{(1)}_{\varphi(0)}}D_{\sigma}$.
\end{theorem}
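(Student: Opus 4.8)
The plan is to verify the operator identity $D_{\varphi}^*T_{K^{(1)}_{\sigma(0)}}^* = T_{K^{(1)}_{\varphi(0)}}D_{\sigma}$ by checking that both sides agree on the reproducing kernels $K_w$, $w\in\D$, since these span a dense subspace of $A^2_{\alpha}$. First I would compute the right-hand side applied to $K_w$: using $D_{\sigma}^* K_w = K^{(1)}_{\sigma(w)}$ — more precisely I need $D_{\sigma}$ itself, but it is cleaner to instead test the adjoint identity by pairing against kernels on both sides, i.e.\ show $\langle D_{\varphi}^*T_{K^{(1)}_{\sigma(0)}}^* K_w, f\rangle = \langle T_{K^{(1)}_{\varphi(0)}}D_{\sigma} K_w, f\rangle$ for suitable $f$, or equivalently take adjoints of the claimed identity and reduce to $T_{K^{(1)}_{\sigma(0)}}D_{\varphi} = D_{\sigma}^*T_{K^{(1)}_{\varphi(0)}}^*$, which can be tested on kernels directly. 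Let me take the second route: apply both sides of $T_{K^{(1)}_{\sigma(0)}}D_{\varphi} = D_{\sigma}^* T_{K^{(1)}_{\varphi(0)}}^*$ to $K_w$.

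The left side gives $K^{(1)}_{\sigma(0)}\cdot D_{\varphi}K_w$. Here I would use the derivative-kernel structure: $D_{\varphi}K_w(z) = K_w'(\varphi(z)) = \overline{w}(\alpha+2)\varphi(z)/(1-\overline{w}\varphi(z))^{\alpha+3}$, or more invariantly $D_{\varphi}K_w = \overline{w}\, C_{\varphi}\bigl(\tfrac{K^{(1)}_w}{z}\bigr)\cdot(\text{const})$ — I would rewrite $K_w'$ in terms of $K^{(1)}_w$ via \myeqref{eqn:kerderbergman}, noting $K_w'(z) = \overline{w}\, K^{(1)}_w(z)/z$. For the right side, I need $T^*_{K^{(1)}_{\varphi(0)}}K_w = \overline{K^{(1)}_{\varphi(0)}(w)}\, K_w$ and then $D_{\sigma}^*$ acting on that. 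The key computational input is the Fatehi–Hammond-type formula $D_{\sigma}^* K_w = K^{(1)}_{\sigma(w)}$, valid on $A^2_\alpha$ because $\|\sigma\|_{\infty}<1$ makes $D_\sigma$ bounded (indeed $\sigma$ is a linear fractional self-map, and boundedness of $D_\varphi$ with $\|\varphi\|_\infty<1$ forces the corresponding statement for $\sigma$; alternatively one checks $\sigma$ directly). Actually the cleanest path is to establish $D_{\sigma}^* K_w = K^{(1)}_{\sigma(w)}$ first as a lemma-level fact by the reproducing property: $\langle f, D_{\sigma}^* K_w\rangle = \langle D_\sigma f, K_w\rangle = (D_\sigma f)(w) = f'(\sigma(w)) = \langle f, K^{(1)}_{\sigma(w)}\rangle$. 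Then the right side becomes $\overline{K^{(1)}_{\varphi(0)}(w)}\, K^{(1)}_{\sigma(w)}$.

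So the whole identity reduces to the pointwise functional equation
\[
K^{(1)}_{\sigma(0)}(z)\cdot K_w'(\varphi(z)) \;=\; \overline{K^{(1)}_{\varphi(0)}(w)}\; K^{(1)}_{\sigma(w)}(z)
\]
for all $z,w\in\D$. At this stage I would substitute the explicit formulas $K^{(1)}_u(z) = (\alpha+2)z/(1-\overline{u}z)^{\alpha+3}$ and $K_w'(\zeta) = \overline{w}(\alpha+2)\zeta/(1-\overline{w}\zeta)^{\alpha+3}$ with $\zeta=\varphi(z)$, along with $\sigma(0) = -\overline{c}/\overline{d}$ and the linear-fractional form of $\varphi$, and verify the identity by the standard algebra relating $1-\overline{\sigma(w)}z$, $1-\overline{w}\varphi(z)$, and the denominators $cz+d$, $-\overline{b}w+\overline{d}$ that underlies the classical Cowen adjoint formula for linear fractional composition operators. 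The main obstacle — and the only real work — is this last algebraic identity: one must carefully track the constant factors $(\alpha+2)$, the extra $z$ and $\varphi(z)$ and $\overline{w}$ factors coming from first-derivative kernels (as opposed to the plain kernels in Cowen's formula), and the conjugation convention defining the Krein adjoint $\sigma$, to confirm everything matches. Once the kernel identity is verified, density of $\operatorname{span}\{K_w\}$ in $A^2_\alpha$ and boundedness of all operators involved (from $\|\varphi\|_\infty<1$ and $\|\sigma\|_\infty<1$, plus boundedness of the Toeplitz operators since $K^{(1)}_{\sigma(0)}, K^{(1)}_{\varphi(0)}\in H^\infty$) finish the proof.
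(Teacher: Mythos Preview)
Your approach is correct and is essentially the same as the paper's: both proofs verify the identity on reproducing kernels using $T_\psi^*K_w=\overline{\psi(w)}K_w$ and $D_\psi^*K_w=K^{(1)}_{\psi(w)}$, then reduce to an explicit algebraic identity between the linear-fractional kernel expressions, and conclude by density of $\operatorname{span}\{K_w\}$. The only cosmetic difference is that you first pass to the adjoint equation $T_{K^{(1)}_{\sigma(0)}}D_{\varphi}=D_{\sigma}^*T_{K^{(1)}_{\varphi(0)}}^*$ before testing on $K_w$, whereas the paper applies both sides of the original identity to $K_w$ directly; since the Krein adjoint is an involution, the resulting pointwise identities are the same computation with $\varphi$ and $\sigma$ interchanged.
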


\begin{proof}
Using the kernel functions for $A_{\alpha}^2$, we have
\begin{eqnarray*}
D_{\varphi}^*T_{K^{(1)}_{\sigma(0)}}^*(K_w)(z)&=&\overline{K^{(1)}_{\sigma(0)}(w)}K^{(1)}_{\varphi(w)}(z)\\
&=& \frac{(\alpha+2)\overline{w}}{(1+\overline{(c/d)w)})^{\alpha+3}}\frac{(\alpha+2)z}{(1-\overline{\varphi(w)}z)^{\alpha+3}}\\
&=& \frac{(\alpha+2)\overline{w}\overline{d}^{\alpha+3}}{(\overline{cw+d})^{\alpha+3}}\frac{(\alpha+2)z}{(1-\overline{\varphi(w)}z)^{\alpha+3}}
\end{eqnarray*}
for any $w$ in $\mathbb{D}$ while
\begin{eqnarray*}
T_{K^{(1)}_{\varphi(0)}}D_{\sigma}(K_w)(z)&=&K^{(1)}_{\varphi(0)}(z)K'_w(\sigma(z))\\
&=&\frac{(\alpha+2)z}{(1-(\overline{b/d})z)^{\alpha+3}}\frac{(\alpha+2)\overline{w}}{(1-\overline{w}\sigma(z))^{\alpha+3}}\\
&=&\frac{(\alpha+2)z\overline{d}^{\alpha+3}}{(-\overline{b}z+\overline{d})^{\alpha+3}}\frac{(\alpha+2)\overline{w}}{\left(1-\overline{w}\left(\frac{\overline{a}z-\overline{c}}{-\overline{b}z+\overline{d}}\right)\right)^{\alpha+3}}\\
&=&\frac{(\alpha+2)^2\overline{d}^{\alpha+3}\overline{w}z}{(-\overline{b}z+\overline{d}-\overline{aw}z+\overline{cw})^{\alpha+3}}\\
&=&\frac{(\alpha+2)^2\overline{d}^{\alpha+3}\overline{w}z}{(\overline{cw+d})^{\alpha+3}(1-\overline{\varphi(w)}z)^{\alpha+3}}.
\end{eqnarray*}
Therefore $D_{\varphi}^*T_{K^{(1)}_{\sigma(0)}}^*=T_{K^{(1)}_{\varphi(0)}}D_{\sigma}$ on the span of the reproducing kernel functions, which constitutes a dense subset of $A^2_{\alpha}$. Hence the two operators are equal. 
\end{proof}

\noindent As we will see below when we move to the space $S^2$, we see that a seemingly different adjoint formula emerges, but there are some new challenges.  Reproducing kernel functions are an important tool and have been particularly useful in studying the adjoint for a composition operator. Because no nice closed form exists for the reproducing kernel on $S^2$ given in Section 2, we will instead consider $S^2$ as a weighted Hardy space with an equivalent norm, which we will denote $\widetilde{S}^2$. Define the weight sequence $\beta(0)=\beta(1)=1$, $\beta(n)=\sqrt{n(n-1)}$ for $n\geq 2$ so that for $f(z)=\sum_{n=0}^{\infty}a_n z^n$ and $g(z)=\sum_{n=0}^{\infty}b_n z^n$, we have
$$
\langle f,g\rangle=a_0\overline{b_0}+a_1\overline{b_1}+\sum_{n=2}^{\infty} a_n\overline{b_n}n(n-1)
$$
With this weight sequence we see that 
\begin{eqnarray*}
K_w(z)&=&1+\overline{w}z+\sum_{n=2}^{\infty} \frac{(\overline{w}z)^n}{n(n-1)}\\
&=&1+2\overline{w}z-\overline{w}z+\sum_{n=2}^{\infty} \frac{(\overline{w}z)^n}{n(n-1)}\\
&=&1+2\overline{w}z+(1-\overline{w}z)\log(1-\overline{w}z).
\end{eqnarray*}
and it follows that on $\widetilde{S}^2$
\begin{eqnarray*}
K^{(1)}_w(z)&=&z-z\log(1-\overline{w}z)
\end{eqnarray*}

\bigskip

For $s=1,2$, let $H_s$ be a  Hilbert space of analytic functions on the disk such that
$$
\langle z^j, z^\ell\rangle_s = \begin{cases}
0 & \text{ if } j\neq \ell \\
\beta_s^2(j) & \text{ if } j=\ell
\end{cases}
$$
where $\{\beta_s(n)\}_{n=0}^{\infty}$ is a sequence of positive real numbers with $\liminf{\beta_s(n)^{1/n}}=1$. We also assume there is a constant $\alpha>0$ such that $\lim_{n\to \infty}\beta_2(n)/\beta_1(n)=\alpha$ so that the norms on $H_1$ and $H_2$ are equivalent.

\begin{prop}[{\cite[Proposition 3.1]{CuckovicLe:2016}}]\label{AdjointCompactDiff} Let $A$ be a bounded linear operator on $H_1$ (hence $A$ is also bounded on $H_2$). Let $B_s$ be the adjoint of $A$ on $H_s$ for $s=1,2$. Then $B_2-B_1$ is a compact operator on $H_2$ (hence, on $H_1$ as well).
\end{prop}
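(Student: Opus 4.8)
The plan is to encode the discrepancy between the two inner products in a single positive diagonal operator and then reduce $B_2-B_1$ to a bounded operator composed with a commutator involving a compact operator.

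First I would record the elementary consequences of the hypothesis $\lim_n \beta_2(n)/\beta_1(n)=\alpha>0$: the sequence $\bigl(\beta_2(n)/\beta_1(n)\bigr)^2$ is bounded and bounded away from $0$, so $H_1$ and $H_2$ are the same space of analytic functions with equivalent norms. I would then introduce the operator $G$ defined on the monomials by $Gz^n=\bigl(\beta_2(n)/\beta_1(n)\bigr)^2 z^n$; it is bounded, positive, self-adjoint on $H_1$, and has bounded inverse $G^{-1}$ (this is precisely where $\alpha>0$ is used). Checking on pairs of monomials gives $\langle f,g\rangle_2=\langle Gf,g\rangle_1$ for all $f,g$.

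Next I would intertwine the two adjoints. Starting from $\langle Af,g\rangle_s=\langle f,B_s g\rangle_s$ for $s=1,2$ and using $\langle\cdot,\cdot\rangle_2=\langle G\cdot,\cdot\rangle_1$ together with the $H_1$-self-adjointness of $G$, a short inner-product computation yields, for all $f,g$,
\[ \langle GB_2 g,f\rangle_1=\langle B_2 g,f\rangle_2=\langle g,Af\rangle_2=\langle B_1 Gg,f\rangle_1, \]
so that $GB_2=B_1G$, i.e. $B_2=G^{-1}B_1G$. Finally I would peel off a compact piece: write $G=\alpha^2 I+K$ with $Kz^n=\bigl(\bigl(\beta_2(n)/\beta_1(n)\bigr)^2-\alpha^2\bigr)z^n$, which is compact since its diagonal entries tend to $0$. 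Then
\[ B_2-B_1=G^{-1}B_1G-B_1=G^{-1}(B_1G-GB_1)=G^{-1}(B_1K-KB_1), \]
and $B_1K-KB_1$ is compact because $B_1$ is bounded and $K$ is compact; composing on the left with the bounded operator $G^{-1}$ keeps it compact. Hence $B_2-B_1$ is compact on $H_1$, and equivalence of the norms transfers this to $H_2$.

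The main obstacle, such as it is, is the bookkeeping in the intertwining step: keeping track of which inner product each $B_s$ is an adjoint in, and invoking the self-adjointness of $G$ on $H_1$ (rather than on $H_2$) at the right place. The remaining points to verify carefully are that $G$ is bounded, positive, and boundedly invertible; all three follow at once from the convergence of its diagonal entries to $\alpha^2>0$. One could alternatively expand $G^{-1}=\alpha^{-2}I+(\text{compact})$ and argue directly, but the commutator form above is shorter.
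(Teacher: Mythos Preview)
The paper does not supply its own proof of this proposition; it is quoted verbatim from \cite[Proposition 3.1]{CuckovicLe:2016} and used as a black box. So there is nothing in the present paper to compare your argument against.

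That said, your argument is correct and is essentially the standard one (and, as far as I know, the one in \cite{CuckovicLe:2016}). The key steps---encoding the change of inner product by the diagonal multiplier $G$ with $Gz^{n}=(\beta_2(n)/\beta_1(n))^{2}z^{n}$, deriving the intertwining relation $GB_2=B_1G$ from $\langle\cdot,\cdot\rangle_2=\langle G\cdot,\cdot\rangle_1$, and then writing $G=\alpha^{2}I+K$ with $K$ compact to reduce $B_2-B_1=G^{-1}[B_1,K]$ to a commutator with a compact operator---are all sound. The only places that deserve a word of care are (a) boundedness and bounded invertibility of $G$, which you correctly trace to positivity of the $\beta_s(n)$ together with convergence of the ratio to $\alpha>0$, and (b) compactness of the diagonal operator $K$, which follows from $c_n\to 0$ by finite-rank approximation in the orthogonal basis $\{z^n/\beta_1(n)\}$. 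Your bookkeeping in the displayed chain of inner products is accurate.
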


\noindent Note that by the above proposition, it suffices to prove our adjoint formula for the space $\widetilde{S}^2$, which is norm equivalent to $S^2$.

For $\varphi(z)=(az+b)/(cz+d)$, define $\eta(z)=(cz+d)^{-1}$ and $\mu(z)=-\overline{b}z+\overline{d}$, which are bounded analytic functions on a neighborhood of the closed unit disk with
$$
1-\overline{\varphi(w)}z=\mu(z)(1-\overline{w}\sigma(z))\overline{\eta(w)}.
$$
Consequently by choosing appropriate branches of the logarithms, we have
\begin{equation}\label{adjointswitch}
\log(1-\overline{\varphi(w)}z)=\log(\mu(z))+\log(1-\overline{w}\sigma(z))+\log(\overline{\eta(w)}),
\end{equation}
(see \cite[Eq.~(2.8)]{CuckovicLe:2016}). 

\begin{lemma}\label{finiterank}
Let $\mathcal{H}$ be a Hilbert space of analytic functions on the unit disc with reproducing kernel $K_w$. Let $\mathcal{X}$ be the set of functions on $\mathbb{D}\times\mathbb{D}$ of the form
$$
f_1(z)\overline{g_1(w)}+\cdots+f_m(z)\overline{g_m(w)}
$$
where $f_1,...,f_m$ and $g_1,...,g_m$ belong to $\mathcal{H}$ and $m$ is a positive integer. Then a bounded linear operator $A:\mathcal{H}\rightarrow\mathcal{H}$ has finite rank if and only if the function $(z,w)\rightarrow\langle AK_w,K_z\rangle$ belongs to $\mathcal{X}$.
\end{lemma}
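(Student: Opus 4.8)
The plan is to prove the two implications separately, relying only on two standard facts about a reproducing kernel Hilbert space $\mathcal{H}$: that $\langle h,K_w\rangle = h(w)$ for every $h\in\mathcal{H}$ and $w\in\D$ (so in particular $(AK_w)(z)=\langle AK_w,K_z\rangle$), and that the linear span of $\{K_w:w\in\D\}$ is dense in $\mathcal{H}$, since any function orthogonal to every $K_w$ vanishes identically. Both directions are then short.

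For the forward implication I would start from the fact that a finite-rank operator on a Hilbert space admits a representation $A=\sum_{j=1}^m\langle\,\cdot\,,h_j\rangle f_j$ with $f_1,\dots,f_m,h_1,\dots,h_m\in\mathcal{H}$ (take an orthonormal basis $e_1,\dots,e_m$ of the range of $A$ and set $f_j=e_j$, $h_j=A^*e_j$). Then for each $w\in\D$,
\[
AK_w=\sum_{j=1}^m\langle K_w,h_j\rangle f_j=\sum_{j=1}^m\overline{h_j(w)}\,f_j,
\]
so pairing against $K_z$ gives $\langle AK_w,K_z\rangle=\sum_{j=1}^m f_j(z)\overline{h_j(w)}$, which by definition lies in $\mathcal{X}$.

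For the converse, suppose $\langle AK_w,K_z\rangle=\sum_{j=1}^m f_j(z)\overline{g_j(w)}$ for all $z,w\in\D$ with $f_j,g_j\in\mathcal{H}$. Fixing $w$, the identity $(AK_w)(z)=\langle AK_w,K_z\rangle=\sum_j f_j(z)\overline{g_j(w)}=\bigl(\sum_j\overline{g_j(w)}f_j\bigr)(z)$ holds for every $z\in\D$; since two elements of $\mathcal{H}$ that agree pointwise on $\D$ are equal, I conclude $AK_w=\sum_{j=1}^m\overline{g_j(w)}f_j\in V:=\mathrm{span}\{f_1,\dots,f_m\}$. Hence $A$ carries the linear span of $\{K_w:w\in\D\}$ into the finite-dimensional (hence closed) subspace $V$; because $A$ is bounded and this span is dense in $\mathcal{H}$, it follows that $A\mathcal{H}\subseteq V$, so $A$ has rank at most $m$.

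The argument is elementary and I do not anticipate a genuine obstacle: the only points needing care are the invocations of the reproducing-kernel facts — that point evaluations separate $\mathcal{H}$ (used to upgrade pointwise equality to equality in $\mathcal{H}$) and that $\overline{\mathrm{span}}\{K_w:w\in\D\}=\mathcal{H}$ — together with the continuity of $A$, which is what lets me pass from the inclusion on the dense kernel span to the inclusion $A\mathcal{H}\subseteq V$.
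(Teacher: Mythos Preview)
Your proof is correct and complete; both implications are handled cleanly via the standard reproducing-kernel facts you identify, and the passage from the dense span of kernels to all of $\mathcal{H}$ using boundedness of $A$ and closedness of the finite-dimensional subspace $V$ is exactly the right move. The paper itself states this lemma without proof, treating it as a known auxiliary fact, so there is no argument in the paper to compare against; your write-up would serve as a self-contained justification.
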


\begin{lemma}\label{lemma:boundedmultiplicationoperator}
Let $\psi$ be analytic in $\D$ with both $\psi$ and $\psi'$ in $A(\D)$. Then $T_{\psi}$ is a bounded multiplication operator on $S^p$.
\end{lemma}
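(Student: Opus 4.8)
The plan is to estimate $\|T_\psi f\|_{S^p}$ directly for $f\in S^p$ via the product rule $(\psi f)'=\psi' f+\psi f'$, so that boundedness reduces to two ingredients: $\psi,\psi'\in A(\D)\subseteq H^\infty$, and a continuous embedding $S^p\hookrightarrow H^p$.

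The first thing I would establish is the embedding $S^p\hookrightarrow H^p$. For $f\in S^p$ and $0<r<1$, the identity $f(re^{i\theta})=f(0)+\int_0^r f'(se^{i\theta})e^{i\theta}\,ds$ gives the pointwise bound $|f(re^{i\theta})|\le|f(0)|+\int_0^r|f'(se^{i\theta})|\,ds$. Applying Hölder's inequality in the $s$-variable over the interval $[0,r]$ of length $r\le 1$, then the elementary inequality $(a+b)^p\le 2^{p-1}(a^p+b^p)$, integrating in $\theta$, and using Tonelli's theorem together with the definition of $\|f'\|_{H^p}$ as a supremum over circles, one obtains
\[
\int_0^{2\pi}|f(re^{i\theta})|^p\,\frac{d\theta}{2\pi}\le 2^{p-1}\bigl(|f(0)|^p+\|f'\|_{H^p}^p\bigr)
\]
uniformly in $r$, whence $f\in H^p$ with $\|f\|_{H^p}\le 2^{1-1/p}\|f\|_{S^p}$. (When $p=2$ this is just the coefficient comparison $\sum|a_n|^2\le|a_0|^2+\sum n^2|a_n|^2$; the argument above handles all $p>1$ with no appeal to boundary behavior. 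Alternatively one could invoke the classical fact that $f'\in H^1$ forces $f$ into the disk algebra.)

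The remaining step is routine. For $f\in S^p$ we have $(\psi f)'=\psi' f+\psi f'$, and since $\psi,\psi'\in H^\infty$,
\[
\|(\psi f)'\|_{H^p}\le\|\psi'\|_\infty\|f\|_{H^p}+\|\psi\|_\infty\|f'\|_{H^p}\le\bigl(2^{1-1/p}\|\psi'\|_\infty+\|\psi\|_\infty\bigr)\|f\|_{S^p},
\]
using the embedding for the first term and $\|f'\|_{H^p}\le\|f\|_{S^p}$ for the second. Adding $|(\psi f)(0)|=|\psi(0)||f(0)|\le\|\psi\|_\infty\|f\|_{S^p}$ shows $\psi f\in S^p$ with $\|T_\psi f\|_{S^p}\le\bigl(2\|\psi\|_\infty+2^{1-1/p}\|\psi'\|_\infty\bigr)\|f\|_{S^p}$, so $T_\psi$ is bounded on $S^p$. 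There is no serious obstacle; the one point needing care is in the embedding, namely to integrate $|f'|$ only over $[0,r]$ with $r<1$, so that solely interior values of $f'$ appear and the $H^p$-norm can be used directly, sidestepping any discussion of existence or integrability of radial limits of $f'$.
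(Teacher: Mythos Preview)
Your argument is correct. The paper states this lemma without proof, so there is nothing to compare; your approach via the product rule $(\psi f)'=\psi' f+\psi f'$ together with the continuous embedding $S^p\hookrightarrow H^p$ is the natural one and goes through exactly as you describe.
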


\noindent The lemma guarantees us that multiplication by the function $g(z)=z$ induces a bounded operator on $S^2$.  

\begin{theorem}\label{thm:S2adjoint}
Let $\Ph$ be a linear fractional self-map of $\D$ with $\|\Ph\|_{\infty}<1$ and consider $\CDO:\widetilde{S}^2\to \widetilde{S}^2$. Then there is a finite rank operator $K_1$ such that $D_{\Ph}^*T_{z}^*=T_{z}D_{\SI} +K_1$.
\end{theorem}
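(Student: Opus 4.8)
The plan is to follow the reproducing-kernel computation used for Theorem~\ref{thm:adjointBergman}, but now on the weighted Hardy model $\widetilde{S}^2$: set $K_1:=D_\Ph^*T_z^*-T_zD_\SI$, evaluate the function $(z,w)\mapsto\langle K_1K_w,K_z\rangle=(K_1K_w)(z)$ explicitly on the reproducing kernels of $\widetilde{S}^2$, and check that it has the separated form appearing in Lemma~\ref{finiterank}; combined with boundedness of $K_1$ this forces $K_1$ to have finite rank.

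First I would assemble the needed facts. For boundedness: $T_z$ is bounded on $\widetilde{S}^2$ by Lemma~\ref{lemma:boundedmultiplicationoperator} applied to $\psi(z)=z$ (here $\psi,\psi'\in A(\D)$), $D_\Ph$ is bounded by Theorem~\ref{thm:bounded} since $\|\Ph\|_\infty<1$, and $D_\SI$ is bounded for the same reason once one notes that $\|\SI\|_\infty<1$ whenever $\|\Ph\|_\infty<1$ (which follows from the intertwining relation $1-\overline{\Ph(w)}z=\mu(z)(1-\overline{w}\SI(z))\overline{\eta(w)}$); hence $K_1$ is bounded. For the kernel action one uses the general identities $T_z^*K_w=\overline{w}K_w$ and $D_\Ph^*K_w=K_{\Ph(w)}^{(1)}$ (valid on any functional Hilbert space on which these operators are bounded, as recalled in the excerpt), together with a direct differentiation of the closed form $K_w(z)=1+2\overline{w}z+(1-\overline{w}z)\log(1-\overline{w}z)$, which gives $K_w'(z)=\overline{w}\bigl(1-\log(1-\overline{w}z)\bigr)$, as well as the already-recorded formula $K_w^{(1)}(z)=z\bigl(1-\log(1-\overline{w}z)\bigr)$.

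Then I would compute both sides on $K_w$. On one hand $(D_\Ph^*T_z^*K_w)(z)=\overline{w}\,K_{\Ph(w)}^{(1)}(z)=\overline{w}z\bigl(1-\log(1-\overline{\Ph(w)}z)\bigr)$, and on the other $(T_zD_\SI K_w)(z)=z\,K_w'(\SI(z))=\overline{w}z\bigl(1-\log(1-\overline{w}\SI(z))\bigr)$. Subtracting and substituting the logarithm decomposition \eqref{adjointswitch} collapses the constant and the $\log(1-\overline{w}\SI(z))$ terms, leaving $(K_1K_w)(z)=-\overline{w}z\log(\mu(z))-\overline{w}z\log(\overline{\eta(w)})$ for the branches fixed in \eqref{adjointswitch}. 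I would then write this as $f_1(z)\overline{g_1(w)}+f_2(z)\overline{g_2(w)}$ with $f_1(z)=-z\log(\mu(z))$, $g_1(w)=w$, $f_2(z)=-z$, and $g_2(w)=w\log(\eta(w))$, using $\log(\overline{\eta(w)})=\overline{\log(\eta(w))}$ for the conjugate branch. Since $\mu$ and $\eta$ have their zero and pole outside $\overline{\D}$ precisely because $\|\Ph\|_\infty<1$, the functions $\log\mu$ and $\log\eta$ are analytic on a neighborhood of $\overline{\D}$; hence $f_1$ and $g_2$ lie in $\widetilde{S}^2$ (any function analytic past $\overline{\D}$ has bounded, hence $H^2$, derivative), as do the polynomials $f_2$ and $g_1$. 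Thus $(z,w)\mapsto\langle K_1K_w,K_z\rangle$ belongs to the set $\mathcal{X}$ of Lemma~\ref{finiterank}, and since $K_1$ is bounded, that lemma yields that $K_1$ has finite rank (in fact rank at most $2$), proving the theorem.

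The genuinely delicate points are small: one must pin down the branches of the logarithms so that \eqref{adjointswitch} is literally an identity and so that $\overline{g_2(w)}=\overline{w}\log(\overline{\eta(w)})$, and one must verify $\|\SI\|_\infty<1$ so that $D_\SI$, and therefore $K_1$, is bounded --- without boundedness Lemma~\ref{finiterank} does not apply. The two kernel evaluations and the check that $f_1,g_2\in\widetilde{S}^2$ are routine once those points are settled.
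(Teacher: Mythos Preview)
Your proposal is correct and follows essentially the same route as the paper: compute $D_\Ph^*T_z^*K_w$ and $T_zD_\SI K_w$ via the $\widetilde{S}^2$ kernel formulas, subtract, use \eqref{adjointswitch} to reduce the difference to $-\overline{w}z(\log\mu(z)+\log(\overline{\eta(w)}))$, and invoke Lemma~\ref{finiterank}. Your write-up is in fact more careful than the paper's on two points the paper leaves implicit: you verify that $K_1$ is bounded (needed to apply Lemma~\ref{finiterank}), in particular arguing $\|\SI\|_\infty<1$, and you explicitly exhibit the separated form $f_1\overline{g_1}+f_2\overline{g_2}$ and check $f_1,g_2\in\widetilde{S}^2$.
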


\begin{proof}
Suppose $\varphi(z)=(az+b)/(cz+d)$ be a linear fractional self-map of the disc with $\|\varphi\|_{\infty}<1$ and $\sigma(z)=(\overline{a}z-\overline{c})/(-\overline{b}z+\overline{d})$ be the Krein adjoint of $\varphi$.  Define $\eta(z)=(cz+d)^{-1}$ and $\mu(z)=-\overline{b}z+\overline{d}$. First observe that
\begin{eqnarray*}
D_{\varphi}^*T_z^*(K_w)(z)&=&\overline{w}K^{(1)}_{\varphi(w)}(z)\\
&=&\overline{w}(z-z\log(1-\overline{\varphi(w)}z))\\
&=&\overline{w}z(1-\log(1-\overline{\varphi(w)}z))
\end{eqnarray*}
and
\begin{eqnarray*}
T_zD_{\sigma}(K_w)(z)&=&zK'_w(\sigma(z))\\
&=&z\overline{w}(1-\log(1-\overline{w}\sigma(z))).
\end{eqnarray*}
Subtracting the two and using \myeqref{adjointswitch} yields
\begin{eqnarray*}
(D_{\varphi}^*T_z^*-T_zD_{\sigma})(K_w)(z)&=&\overline{w}z(-\log(1-\overline{\varphi(w)}z)+\log(1-\overline{w}\sigma(z)))\\
&=&\overline{w}z(-\log(\mu(z))-\log(1-\overline{w}\sigma(z))-\log(\overline{\eta(w)})+\log(1-\overline{w}\sigma(z)))\\
&=&-\overline{w}z(\log(\mu(z))+\log(\overline{\eta(w)})),
\end{eqnarray*}
which is a finite rank operator by Lemma \ref{finiterank}.
\end{proof}

\noindent Combining the above result with Proposition \ref{AdjointCompactDiff} yields the immediate Corollary.

\begin{cor}\label{cor:S2adjoint}
Let $\Ph$ be a linear fractional self-map of $\D$ with $\|\Ph\|_{\infty}<1$ and consider $\CDO:S^2\to S^2$. Then there is a compact operator $K_2$ such that $D_{\Ph}^*T_{z}^*=T_{z}D_{\SI} +K_2$.
\end{cor}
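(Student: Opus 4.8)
The plan is to read the corollary off from Theorem~\ref{thm:S2adjoint} together with Proposition~\ref{AdjointCompactDiff}, the point being that passing from the norm-equivalent space $\widetilde{S}^2$ to $S^2$ only alters an operator's adjoint by a compact perturbation. I would begin by fixing notation for the two adjoints: write $A^{*_{S^2}}$ and $A^{*_{\widetilde{S}^2}}$ for the Hilbert-space adjoints of a bounded operator $A$ computed with respect to the $S^2$ and $\widetilde{S}^2$ inner products respectively, so that $D_\Ph^{*}=D_\Ph^{*_{S^2}}$ in the notation of the statement. Note that $T_z$, $D_\Ph$, and $D_\SI$ are genuine bounded operators on both spaces: $T_z$ by Lemma~\ref{lemma:boundedmultiplicationoperator}, $D_\Ph$ by Theorem~\ref{thm:bounded} since $\|\Ph\|_{\infty}<1$, and $D_\SI$ because for a linear fractional self-map the condition $\|\Ph\|_{\infty}<1$ likewise forces $\|\SI\|_{\infty}<1$ (alternatively, boundedness of $T_zD_\SI$ on $\widetilde{S}^2$ is already a consequence of Theorem~\ref{thm:S2adjoint}, whose left side is a composition of bounded operators and whose $K_1$ has finite rank). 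Boundedness and compactness are moreover unaffected by replacing $S^2$ with the equivalently normed $\widetilde{S}^2$.

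Next I would check that the pair $H_1=\widetilde{S}^2$, $H_2=S^2$ meets the hypotheses of Proposition~\ref{AdjointCompactDiff}. Both are weighted Hardy spaces: the weight of $\widetilde{S}^2$ is $\beta_1(0)=\beta_1(1)=1$, $\beta_1(n)=\sqrt{n(n-1)}$ for $n\geq 2$, and that of $S^2$ is $\beta_2(0)=1$, $\beta_2(n)=n$ for $n\geq 1$; then $\beta_s(n)^{1/n}\to 1$ and $\beta_2(n)/\beta_1(n)=\sqrt{n/(n-1)}\to 1>0$, as required. Applying Proposition~\ref{AdjointCompactDiff} to $A=D_\Ph$ and to $A=T_z$ produces compact operators $C_1,C_2$ with $D_\Ph^{*_{S^2}}=D_\Ph^{*_{\widetilde{S}^2}}+C_1$ and $T_z^{*_{S^2}}=T_z^{*_{\widetilde{S}^2}}+C_2$. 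Multiplying and expanding,
\[
D_\Ph^{*_{S^2}}T_z^{*_{S^2}}=D_\Ph^{*_{\widetilde{S}^2}}T_z^{*_{\widetilde{S}^2}}+\bigl(D_\Ph^{*_{\widetilde{S}^2}}C_2+C_1T_z^{*_{\widetilde{S}^2}}+C_1C_2\bigr),
\]
and the bracketed term is compact because the compact operators form a two-sided ideal in the bounded operators. By Theorem~\ref{thm:S2adjoint}, $D_\Ph^{*_{\widetilde{S}^2}}T_z^{*_{\widetilde{S}^2}}=T_zD_\SI+K_1$ with $K_1$ of finite rank; since a finite-rank operator is compact and a sum of compact operators is compact, setting $K_2$ equal to $K_1$ plus the bracketed term yields $D_\Ph^{*}T_z^{*}=T_zD_\SI+K_2$ with $K_2$ compact, which is the assertion.

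There is essentially no obstacle here beyond careful bookkeeping: all the mathematical content lives in Theorem~\ref{thm:S2adjoint} (the finite-rank identity on $\widetilde{S}^2$) and in Proposition~\ref{AdjointCompactDiff} (the compact dependence of the adjoint on the choice of equivalent norm). The only things that genuinely need checking are (i) that the weight sequences of $\widetilde{S}^2$ and $S^2$ satisfy the root and ratio conditions of Proposition~\ref{AdjointCompactDiff}, and (ii) that one keeps straight which inner product each adjoint refers to when expanding the product; a minor point worth a sentence is the boundedness of $D_\SI$, which for linear fractional symbols is automatic from $\|\Ph\|_{\infty}<1$.
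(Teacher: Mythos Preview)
Your proposal is correct and follows exactly the approach the paper itself indicates: the paper's proof consists of the single sentence that combining Theorem~\ref{thm:S2adjoint} with Proposition~\ref{AdjointCompactDiff} yields the corollary, and your write-up supplies precisely those details (verifying the weight-sequence hypotheses, applying the proposition to $D_\Ph$ and $T_z$ separately, and expanding the product).
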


To understand why this adjoint formula does not seem to resemble that for the weighted Bergman spaces given earlier, in particular why the multipliers are not kernel functions for evaluation of the first derivative at $\varphi(0)$ and $\sigma(0)$, we need a broader perspective. For $\alpha \in \mathbb{R}$, we say that a function $f$ (analytic in $\D$) is in $A_{\alpha}^2$ if there is a non-negative integer $k$ such that $\alpha+2k>-1$ and $f^{(k)}$ belongs to $A_{\alpha+2k}^2$. For $\alpha >-1$, this definition is equivalent to our definition for the standard weighted Bergman spaces given at the beginning of this section. See \cite{ZhaoZhu2008} for more information. For $\alpha\in \mathbb{R}$ it is known that $A_{\alpha}^2$ is a reproducing kernel Hilbert space with an appropriate norm/inner product and we have the following form for the reproducing kernels: for $\alpha+2>0$, 
\[
K_w^{\alpha}(z)=\frac{1}{(1-\overline{w}z)^{\alpha+2}};
\] when $-N<\alpha+2<-N+1$ for $N$ a positive integer, we have
\[
K_w^{\alpha}(z)=\frac{(-1)^N}{(1-\overline{w}z)^{\alpha+2}}+Q(\overline{w}z),
\] where $Q$ is an analytic polynomial of degree $N$; and for $\alpha+2=-N$ for $N$ a non-negative integer, then
\[
K_w^{\alpha}(z)=(\overline{w}z-1)^N\log\left(\frac{1}{1-\overline{w}z}\right)+Q(\overline{w}z), 
\] where $Q$ is an analytic polynomial of degree $N$. Moreover, for $\alpha\in\mathbb{R}$, $\|z^n\|_{\alpha}$ is asymptotic to $n^{-(\alpha+1)/2}$ as $n\to\infty$. Thus $A_{\alpha}^2$ is norm equivalent to the weighted Hardy space with weight sequence defined by $\beta(0)=1$ and $\beta(n)=n^{-(\alpha+1)/2}$. For $\alpha>-1$, this space is norm equivalent to $A_{\alpha}^2$. When $\alpha=-1,-2, -3$ the space is norm equivalent to the Hardy space, the Dirichlet space, and $S^2$, respectively.

Our proof of Theorem \ref{thm:adjointBergman}, shows that a similar adjoint formula would hold for $\CDO$ acting on a weighted Bergman space with $\alpha>-2$ since the kernels have a similar form in this case; this includes the Hardy space. In \cite{AllenHellerPons:2021}, the authors show that for $\CDO$ acting on the Dirichlet space ($\alpha=-2$) and $\Ph$ linear fractional with $\|\Ph\|_{\infty}<1$, the formula $D_{\varphi}^*T_{K^{(1)}_{\sigma(0)}}^*=T_{K^{(1)}_{\varphi(0)}}D_{\sigma}$ holds where in this case we have \begin{equation}\label{eqn:kerderdirichlet}K_w^{(1)}=\frac{z}{1-\overline{w}z}.\end{equation}

Now we can tie all of these formulas together. For $\alpha \in \mathbb{R}$, let \[g_{\alpha}(z)=\frac{z}{(1-\overline{\Ph(0)}z)^{\alpha+3}} \textrm{\hspace{.2in}and\hspace{.2in}} h_{\alpha}(z)=\frac{z}{(1-\overline{\sigma(0)}z)^{\alpha+3}}.\] Note that for $\alpha=-2$, $g_{\alpha}(z)=K^{(1)}_{\varphi(0)}(z)$ and $h_{\alpha}(z)=K^{(1)}_{\sigma(0)}(0)$ for the kernels in Eqn. (\ref{eqn:kerderdirichlet}). Similarly, when $\alpha=-1$, these same relationships hold for the kernels from Eqn. (\ref{eqn:kerderhardy}). When $\alpha>-1$, the kernels in Eqn. (\ref{eqn:kerderbergman}) have a factor of $\alpha+2$, but these would cancel in the adjoint calculation.  Thus, combining the results from \cite{AllenHellerPons:2021} and \cite{FatehiHammond:2020} with our Theorem \ref{thm:adjointBergman} and Corollary \ref{cor:S2adjoint}, we have the following result.

\begin{theorem}
Let $\varphi$ be a linear fractional self-map of $\mathbb{D}$ with $\|\Ph\|_{\infty}<1$ and consider $\CDO:A_{\alpha}^2\to A_{\alpha}^2$. If $\alpha\geq-2$ or $\alpha=-3$, then there is a compact operator $K$ such that $D_{\Ph}^*T_{h_{\alpha}}^*=T_{g_{\alpha}}D_{\SI} +K$.
\end{theorem}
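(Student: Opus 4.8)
The plan is to establish the identity in four cases---$\alpha>-1$, $\alpha=-1$, $\alpha=-2$, and $\alpha=-3$---in each case invoking an adjoint formula already on the table and reconciling the multipliers appearing there with $g_{\alpha}$ and $h_{\alpha}$. For $\alpha\ge -2$ the conclusion comes out with $K=0$ (or, if one insists on the intrinsic $A^2_\alpha$-norm rather than an equivalent one, with a compact $K$), and only the case $\alpha=-3$ genuinely produces a nonzero compact $K$.

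Suppose first $\alpha>-1$, so that $A^2_\alpha$ is the standard weighted Bergman space of Theorem~\ref{thm:adjointBergman}. From \eqref{eqn:kerderbergman} we have $K^{(1)}_{\varphi(0)}=(\alpha+2)g_{\alpha}$ and $K^{(1)}_{\sigma(0)}=(\alpha+2)h_{\alpha}$; since $\alpha$ is real this gives $T_{K^{(1)}_{\sigma(0)}}^{*}=(\alpha+2)T_{h_{\alpha}}^{*}$ and $T_{K^{(1)}_{\varphi(0)}}=(\alpha+2)T_{g_{\alpha}}$, so Theorem~\ref{thm:adjointBergman} reads $(\alpha+2)D_{\varphi}^{*}T_{h_{\alpha}}^{*}=(\alpha+2)T_{g_{\alpha}}D_{\sigma}$; cancelling the nonzero scalar $\alpha+2$ yields the claim with $K=0$. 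The cases $\alpha=-1$ and $\alpha=-2$ run identically: here the exponent $\alpha+3$ equals $2$ and $1$, so $g_{\alpha}$ and $h_{\alpha}$ are exactly the first-derivative reproducing kernels $K^{(1)}_{\varphi(0)}$ and $K^{(1)}_{\sigma(0)}$ of \eqref{eqn:kerderhardy} (when $\alpha=-1$) and of \eqref{eqn:kerderdirichlet} (when $\alpha=-2$), and the formulas proved in \cite[Theorem~1]{FatehiHammond:2020} on $H^2$ and in \cite{AllenHellerPons:2021} on the Dirichlet space read $D_{\varphi}^{*}T_{h_{\alpha}}^{*}=T_{g_{\alpha}}D_{\sigma}$. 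Viewing these instead as statements on $A^2_{-1}$, $A^2_{-2}$ with their intrinsic norms---only equivalent to the $H^2$ and Dirichlet norms---Proposition~\ref{AdjointCompactDiff} shows the two systems of adjoints differ by compact operators, so after absorbing these a compact $K$ remains.

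For $\alpha=-3$ the exponent $\alpha+3$ is $0$, hence $g_{-3}(z)=h_{-3}(z)=z$ and $T_{g_{-3}}=T_{h_{-3}}=T_{z}$, which is bounded on $S^2$ by Lemma~\ref{lemma:boundedmultiplicationoperator} and on every weighted Bergman space because $z\in H^{\infty}$. Corollary~\ref{cor:S2adjoint} supplies a compact operator $K_2$ with $D_{\varphi}^{*}T_{z}^{*}=T_{z}D_{\sigma}+K_2$ on $S^2$. Since $\|z^n\|_{-3}$ is asymptotic to $n$, the space $A^2_{-3}$ is norm-equivalent to $S^2$, so Proposition~\ref{AdjointCompactDiff} applies to $A=D_{\varphi}$ and to $A=T_{z}$, each bounded on both spaces: their $A^2_{-3}$-adjoints differ from their $S^2$-adjoints by compact operators. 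Expanding $D_{\varphi}^{*}T_{z}^{*}$ in the $A^2_{-3}$-inner product and using that a bounded operator times a compact one is compact, we get $D_{\varphi}^{*}T_{z}^{*}-T_{z}D_{\sigma}=K_2+(\text{compact})$ on $A^2_{-3}$, which is compact. Taking $K$ to be this operator when $\alpha=-3$, and $K=0$ (up to the compact error above) otherwise, completes the proof.

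The argument is essentially organizational---line up the multipliers $g_{\alpha},h_{\alpha}$ with the kernels in the adjoint formulas already known. The one place requiring care, and what I expect to be the main obstacle, is passing between the several norm-equivalent realizations of the same space ($H^2$ vs.\ $A^2_{-1}$, the Dirichlet space vs.\ $A^2_{-2}$, $S^2$ vs.\ $A^2_{-3}$), which is precisely the role of Proposition~\ref{AdjointCompactDiff} and is harmless here since only a compact $K$ is asserted. One should also note at the outset that every operator in play---$D_{\varphi}$, $D_{\sigma}$, and the Toeplitz operators $T_{g_{\alpha}},T_{h_{\alpha}}$---is bounded on each space involved: the composition--differentiation operators by the standing hypothesis $\|\varphi\|_{\infty}<1$ via \cite{HibschweilerPortnoy:2005} and Theorem~\ref{thm:bounded} (with $D_{\sigma}$ bounded as in the cited adjoint results), and the Toeplitz operators by the $H^{\infty}$ criterion on Bergman spaces together with Lemma~\ref{lemma:boundedmultiplicationoperator} on $S^2$.
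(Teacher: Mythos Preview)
Your proof is correct and follows precisely the approach the paper indicates: case-split on $\alpha$, identify $g_\alpha,h_\alpha$ with the relevant first-derivative kernels, and invoke Theorem~\ref{thm:adjointBergman}, \cite{FatehiHammond:2020}, \cite{AllenHellerPons:2021}, and Corollary~\ref{cor:S2adjoint} in the four ranges. You are in fact more explicit than the paper about using Proposition~\ref{AdjointCompactDiff} to pass from the model spaces $H^2$, $\mathcal{D}$, $S^2$ to the norm-equivalent $A^2_{-1}$, $A^2_{-2}$, $A^2_{-3}$, which is exactly the mechanism that produces the compact remainder $K$.
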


\noindent It seems likely that this result holds for any $\alpha\in\mathbb{R}$ following the methods of \cite{CuckovicLe:2016}.

\bibliographystyle{amsplain}
\bibliography{references.bib}

\end{document}